\documentclass[twoside,a4paper]{amsart}

\usepackage{pdfsync}

\usepackage{xcolor}
\definecolor{webgreen}{rgb}{0,.5,0}
\definecolor{webbrown}{rgb}{.6,0,0}
\definecolor{RoyalBlue}{cmyk}{1, 0.50, 0, 0}
\usepackage[colorlinks=true, breaklinks=true, urlcolor=webbrown, linkcolor=RoyalBlue, citecolor=webgreen,backref=page]{hyperref}

\usepackage{epsfig, graphicx, subfigure}
\usepackage{amsmath, amssymb}

\normalfont
\usepackage{newtxtext,newtxmath}

\usepackage{mathabx}

\usepackage{mathtools}
\mathtoolsset{showonlyrefs}

\newcommand{\R}     {\mathbb{R}}
\newcommand{\C}     {\mathbb{C}}
\newcommand{\N}     {\mathbb{N}}

\newcommand{\supp}{\mathrm{supp}}

\newcommand{\sgn}{\mathrm{sgn}}

\renewcommand{\det}{\mathrm{det}}

\newcommand{\ic}{\mathrm{i}}

\def\ge{\geqslant}
\def\le{\leqslant}

\newtheorem{theorem}{Theorem}
\newtheorem{proposition}[theorem]{Proposition}
\newtheorem{corollary}[theorem]{Corollary}

\theoremstyle{remark}

\begin{document}

\title[On AAA approximants]{On AAA approximants on an interval to Nevanlinna-Pick functions and P\'olya Frequency Series}

\begin{abstract}
We study the connection between AAA (triple A) algorithm and  multi-point Pad\'e approximation when the approximated functions belong to either the Nevanlinna-Pick class or are P\'olya frequency series. 
\end{abstract}

\author{Sergey A. Denisov}
\address{Department of Mathematics, University of Wisconsin-Madison, 480~Lincoln Dr., Madison, WI 53706, USA}
\email{\href{mailto:denissov@math.wisc.edu}{denissov@math.wisc.edu}}

\author{Maxim L. Yattselev}

\address{Department of Mathematical Sciences, Indiana University Indianapolis, 402~North Blackford Street, Indianapolis, IN 46202}

\email{\href{mailto:maxyatts@iu.edu}{maxyatts@iu.edu}}

\thanks{The research of the first author was supported by the  NSF grant DMS-2450716, the Simons Fellowship in Mathematics, the Simons Travel Support for Mathematicians Award, and the Van Vleck Professorship Research Award.  The second author completed this work during his tenure as Royal Society Wolfson Visiting Fellow (RSWVF\textbackslash R3 \textbackslash 253003) at the School of Mathematics in Bristol University where part of this work was done.} 

\subjclass{}

\keywords{}

\maketitle

\section{Introduction and Main Results}

Rational approximation of analytic functions is a well established subject \cite{BakerGraves-Morris,Gaier,PetrushevPopov,StahlTotik,Walsh}. A lot of the theoretical literature is concerned with multi-point Pad\'e approximants (rational functions whose coefficients are chosen to satisfy the maximal possible number of interpolation conditions) and/or best rational approximants (rational functions whose coefficients are chosen to minimize a given norm on a given set), see \cite{uBarYa,BarStYa12,GonRakh87} and references therein.  These theoretical methods yield corresponding numerical algorithms that vary in degree of efficiency and difficulty of implementation, see \cite{Meinardus,Trefethen} or \url{https://project.inria.fr/rarl2/}. In recent years, Trefethen and collaborators \cite{MR3805855,MR5088237} have popularized the use of the AAA algorithm, which appears to be computationally fast and numerically accurate. The AAA algorithm can be viewed as a mixture of interpolation and norm-minimization approaches. Indeed, let \( f(x) \) be a smooth real-valued function on an interval $[a,b]$ (in what follows, we only consider the case where the interpolation  takes place on an interval). Denote by \( K_f(x,y) \) the corresponding Loewner kernel, defined as
\begin{align}
\label{Loewner kernel}
K_f(x,y) := \frac{f(x)-f(y)}{x-y},
\end{align}
which is a continuous function on $[a,b]\times[a,b]$ due to the  smoothness of \( f(x) \). Let  \( {\nu_n} \) be a finite Borel measure with at least \( n \) points in its support, which is a subset of $[a,b]$, and \( T_n\subset [a,b] \) be a collection of \( n \) distinct points:
\begin{align}
\label{nu_n and T_n}
\supp\,\nu_n\subseteq [a,b], \;\; \#\,\supp\,\nu_n\geq n, \quad \text{and} \quad T_n=\{a\leq t_1<t_2<\cdots<t_n\leq b\}.
\end{align}
 The \emph{AAA approximant of \( f(x) \) corresponding to the pair \( (\nu_n,T_n) \)} is a rational function \( R_n(x) \) of type \( (n-1,n-1) \),  defined by
\begin{align}
\label{AAA}
R_n(x) = \left( \sum_{k=1}^n \frac{b_k f(t_k)}{x-t_k} \right) / \left( \sum_{k=1}^n \frac{b_k}{x-t_k} \right),
\end{align}
where the real coefficients  \( b_k \), the barycentric weights, satisfy \( b_1^2+\cdots+b_n^2=1 \) (we normalize the non-zero coefficient $b_k$ with the smallest index $k$ to be positive) and are chosen so that
\begin{equation}
\label{L2-min}
\int\left( \sum_{k=1}^n b_k K_f(x,t_k) \right)^2 d{\nu_n}(x) = \min_{c_1^2+\cdots+c_n^2=1}\int\left( \sum_{k=1}^n c_k K_f(x,t_k) \right)^2 d{\nu_n}(x).
\end{equation}
Clearly, such $b_1,\ldots,b_n$ exist. Notice that \( R_n(t_k)=f(t_k) \) if \( b_k\neq 0 \), \( 1\leq k\leq n\). Hence, \( R_n(x) \) is a rational interpolant of \( f(x) \) written in the barycentric form with its barycentric coordinates chosen so that the linearized error of approximation
\begin{align}
\label{lin error}
f(x)\sum_{k=1}^n \frac{c_k}{x-t_k} - \sum_{k=1}^n \frac{c_k f(t_k)}{x-t_k} = \sum_{k=1}^n c_k K_f(x,t_k)
\end{align}
is minimized in the \( L^2(\nu_n)\)-norm. In the AAA algorithm, see \cite{AA,MR3805855,MR5088237}, one starts with a finite set $X\subset [a,b]$ on which $f(x)$ is given. Then, one recursively chooses sets \( T_1\subset T_2\subset\ldots \subset T_n\subset X \) in a greedy fashion. That is, for known $T_m=\{t_i\}_{i=1}^m$, the next interpolation point $t_{m+1}$ is chosen to maximize the approximation error
\[
t_{m+1} = \mathrm{arg \, max}_{x\in X} |f(x)-R_{m}(x)|.
\]
Here, the approximant $R_m(x)$ is defined by \eqref{AAA} and \eqref{L2-min}, where  \( {\nu_m}=\sum_{x\in X\backslash T_m} \delta_x \).

In another connection, if \( f\in\mathcal H[a,b] \) -- that is, \( f(x) \) extends to an analytic functions in some  domain $\mathcal{D}\supset[a,b]$ -- then, as mentioned before, there is another well-studied type of rational interpolation: multi-point Pad\'e approximation. Recall that \emph{a multi-point Pad\'e approximant of \( f(z) \) in a bounded domain $\mathcal{D}$} is a rational function \( q(z)/p(z) \) of type \((n-1,n-1)\) for which there exists a polynomial \( v(z) \) of degree at least \( 2n-1 \) with all zeros in \( \mathcal{D} \) such that the linearized error \( (pf-q)(z)/v(z)\) is also analytic in \( \mathcal{D} \) (if \( \mathcal D \) is unbounded, this definition should be modified to allow the interpolation conditions be placed at infinity as well). 

Let us emphasize that the coefficients of the polynomials forming a AAA approximant are chosen to try interpolating at given \( n \) points while minimizing a certain \( L^2\)-norm, whereas the coefficients of the polynomials forming a multi-point Pad\'e approximant are chosen to try interpolating at \( 2n-1 \) points in Hermite sense. (We say ``try'' because interpolation in AAA approximation can fail due to the possible vanishing of some  barycentric weights $b_k$, while interpolation in Pad\'e approximation can fail due to possible common zeros of \( p(z) \), \( q(z) \) and \( v(z) \).) Observe also that in both cases it is not a priori clear where the poles of the approximants are located.

Currently, the rigorous theory of multi-point Pad\'e approximation is developed better owing in part to its connection to orthogonal polynomials. On the other hand, the AAA algorithm is an effective and established method of numerical rational approximation \cite{MR5088237}.  Our current study is motivated by the following question: 
\smallskip

\emph{When is a AAA approximant on an interval  also a multi-point Pad\'e approximant?} 
\smallskip

\noindent
Below we show that this implication does take place for  two classes of functions: \emph{Nevanlinna-Pick functions} and \emph{P\'olya frequency series}. This modest list is most certainly not exhaustive. In Section~\ref{sec:loewner}, we explain how we arrived at these two classes and how to look for other ones to which our approach could apply.

Let us now recall some definitions. A function \( F(z) \) is called a \emph{Nevanlinna-Pick function} if it maps $\C^+$ (the upper half-plane) into itself and is analytic on $\C^+$. Such functions are characterized by the integral representation
\begin{align}
\label{N-P}
F(z) = A+ Bz + \int\frac{1+zs}{s-z}d\mu(s),
\end{align}
where \(\mu\) is a finite Borel measure on the real line, \( A \) is real, and \( B\geq 0 \), see \cite{Simon}. Clearly, \( F(z) \) is a rational function if and only if the support of \( \mu \) has finite cardinality. This class, in particular, includes such functions as \( \log z\) and \( z^\alpha \) with \( \alpha\in (0,1) \). {\it Markov functions} are defined by the formula
\[
\widehat\sigma(z) := \int\frac{d\sigma(s)}{z-s},
\]
where \( \sigma \) is a compactly supported Borel measure on the real line. Since
\[
-\widehat\sigma(z)= \int{s}d\mu(s) + \int\frac{1+zs}{s-z}d\mu(s),
\]
where \( d\mu(s) =(1+s^2)^{-1}d\sigma(s) \), we conclude that $-\widehat\sigma(z)$ is a Nevanlinna-Pick function. Observe that every Nevanlinna-Pick function  satisfying  $\supp\, \mu\, \cap\, [a,b]=\emptyset$ is necessarily real on $[a,b]$ and analytic in some neighborhood of this interval. 

In another direction, a function \( F(z) \) is called a \emph{P\'olya frequency series} if its Taylor coefficients at the origin form a totally positive sequence. More precisely, if
\begin{align}
\label{PFS-Taylor}
F(z) = a_0 + a_1z + \cdots + a_nz^n + \cdots,
\end{align}
then every minor of the Toeplitz matrix \( (a_{i-j})_{i,j=0}^\infty \) is non-negative, where we  set \( a_n=0\) for \( n<0\); see \cite{MR23309}. Schoenberg proved \cite{MR23309} that every function of the form
\begin{align}
\label{PFS}
F(z) = cz^\lambda e^{\gamma z}\prod_{i=1}^\infty \frac{1+\alpha_iz}{1-\beta_i z},
\end{align}
where \( c,\lambda,\gamma,\alpha_i,\beta_i\geq 0 \), \( \lambda \) is a non-negative integer, and \( \sum_i(\alpha_i+\beta_i)<\infty\), is a P\'olya frequency series. The converse  was later established by Edrei \cite{MR53176}; see also \cite{MR53174}. That is, \( F(z) \) is a P\'olya frequency series if and only if it admits representation \eqref{PFS}.

\begin{theorem}
\label{thm:1}
Assume that a function \( F(x) \) is not rational and is
\begin{itemize}
    \item [(i)] either a Nevanlinna-Pick function \eqref{N-P} corresponding to a finite Borel measure \( \mu \) whose support is disjoint from an interval \( [a,b] \);
    \item [(ii)] or a P\'olya frequency series  \eqref{PFS} satisfying \( [a,b]\subset (r_{\mathrm z},r_{\mathrm p}) \), where \( r_{\mathrm z} \) and \( r_{\mathrm p} \) denote, respectively, the largest zero and the smallest pole of \( F(x) \) (\( r_{\mathrm z}=-\infty \) if \( F(x)\) has no zeros and \( r_{\mathrm p}=\infty \) if \( F(x)\) has no poles).
\end{itemize}
Given \( (\nu_n,T_n) \) as in \eqref{nu_n and T_n}, the corresponding AAA approximant \( R_n(x) \) of \( F(x) \) is uniquely determined by \eqref{AAA} and \eqref{L2-min}. Set 
\[
P_n(z) := \sum_{k=1}^n b_k L_k(z) \quad \text{and} \quad Q_n(z) := \sum_{k=1}^n b_k F(t_k) L_k(z)\,,
\]
where \( L_k(z) := \prod_{i\neq k}(z-t_i) \), so that \( R_n(x)=Q_n(x)/P_n(x) \). Then, there exist additional interpolation points \( a<t_1^\prime<\ldots< t_{n-1}^\prime<b \) such that 
\begin{align}\label{cot1}
(P_nF-Q_n)V_n^{-1}\in\mathcal H[a,b], \quad V_n(z) := \prod_{k=1}^n(z-t_k)\prod_{k=1}^{n -1}(z-t_k^\prime).
\end{align}
\end{theorem}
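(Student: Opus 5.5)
The plan is to show that the Loewner kernel $K_F(x,y)$ is a strictly totally positive kernel, up to sign and a diagonal rescaling, on $[a,b]\times[a,b]$. This lets us treat the AAA minimization as the lowest eigenvector problem of a matrix with oscillation (Gantmacher--Krein) structure, so the minimizing combination $\sum_k b_kK_F(x,t_k)$ will have exactly $n-1$ sign changes in $(a,b)$, located away from the $t_k$.

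\emph{Case (i).} From \eqref{N-P},
\[
K_F(x,y)=B+\int\frac{(1+s^2)\,d\mu(s)}{(s-x)(s-y)}.
\]
Split $\mu$ into the parts $\mu_\pm$ to the right and left of $[a,b]$. Each part gives a Cauchy-type kernel $\int (s-x)^{-1}(s-y)^{-1}d\mu_\pm(s)$. For $s>b$ the kernel $x\mapsto (s-x)^{-1}$ is strictly totally positive in $(x,s)$, and for $s<a$ so is $x\mapsto (x-s)^{-1}$. By the Cauchy--Binet (basic composition) formula, $K_F$ is therefore a positive semidefinite kernel that is a sum of ``Gram'' kernels $\int\phi_s(x)\phi_s(y)\,d\tilde\mu(s)$ over an extended T-system $\{\phi_s\}$. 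Since $F$ is not rational, $\supp\mu$ is infinite. Hence for any $n$ distinct $t_k$, the functions $K_F(\cdot,t_k)$ span a Chebyshev (Haar) system on $[a,b]$: every nontrivial combination has at most $n-1$ zeros counted with multiplicity. I would verify this by writing $g(x)=\sum c_kK_F(x,t_k)$ and noting that $g(x)\prod_k(x-t_k)$ has the form $\int \frac{(1+s^2)\,\omega(s)}{s-x}\,d\mu(s)+\text{poly}$, where $\omega(s)=\sum_kc_k\prod_{i\ne k}(s-t_i)\cdots$. The zero count then follows from the classical argument for Markov-type functions (orthogonality or argument principle).

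\emph{Case (ii).} Here I would use $\log F$ instead. From \eqref{PFS}, $(\log F)'=\lambda/x+\gamma+\sum\alpha_i/(1+\alpha_ix)+\sum\beta_i/(1-\beta_ix)$, which has a Nevanlinna-type structure on $(r_{\mathrm z},r_{\mathrm p})$. The aim is the same Haar-space property of $\{K_F(\cdot,t_k)\}$, obtained via the divided-difference representation $K_F(x,y)=\int_0^1F'(y+\tau(x-y))\,d\tau$ and total positivity of $F$ (Pólya frequency) kernels. This is the step I expect to be the main obstacle. What I would try first is the interlacing/variation-diminishing property of PF sequences, transplanted to the continuous variable via Schoenberg's representation.

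Once the Haar property is available, the rest is uniform. Uniqueness follows because the Gram matrix $G_{jk}=\int K_F(x,t_j)K_F(x,t_k)\,d\nu_n$ is positive definite (there are at least $n$ support points and the system is Haar). We then need the lowest eigenvalue to be simple. For this I would show that $\sum_k b_kK_F(x,t_k)$, for the minimizer, changes sign at least $n-1$ times on $\supp\nu_n$. Suppose it changed sign fewer times. Then, by the Haar property, one can form $h=\sum c_kK_F(\cdot,t_k)$ with the same sign pattern and orthogonal to the minimizer in $L^2(\nu_n)$. This contradicts orthogonality, since eigenvectors for a repeated eigenvalue could be combined to vanish at extra points, or directly by sign considerations. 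Combined with the upper bound of $n-1$ zeros, this gives exactly $n-1$ simple zeros $t'_k\in(a,b)$, and simplicity of the eigenvalue (hence uniqueness of $b$) follows. Finally,
\[
P_nF-Q_n=\prod_k(x-t_k)\sum_kb_kK_F(x,t_k),
\]
which is analytic near $[a,b]$ and vanishes at the $t_k$ and the $t'_k$, all simple and distinct. The $t'_k$ avoid the $t_k$ because the zero count is taken with multiplicity for the extended system $\prod_k(x-t_k)\cdot$span. This yields \eqref{cot1}.
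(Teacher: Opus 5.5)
\emph{The sign-change step fails as written.} You argue only from the Haar property of $\mathrm{span}\{K_F(\cdot,t_k)\}_{k=1}^n$. That property does not change under a change of basis, but the normalization $\sum_k c_k^2=1$ in \eqref{L2-min} does, so the Haar property alone cannot force $n-1$ sign changes. A counterexample: on $[0,1]$ with $d\nu=dx$, take $\varphi_1=1$ and $\varphi_2=(x-\tfrac12)/\sqrt{\epsilon}$ with $0<\epsilon<1/12$. The span is Haar, and $\int(c_1\varphi_1+c_2\varphi_2)^2dx=c_1^2+c_2^2/(12\epsilon)$ is minimized by $b=(1,0)$, i.e.\ by $\varphi\equiv1$, which has no zeros. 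The only $h$ whose coefficients are orthogonal to $b$ is $\varphi_2$, and it changes sign, so the $h$ your argument needs does not exist.

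The missing idea is to use the specific basis through sign-regularity of $K_F$ of \emph{all} orders. For ordered $x_l$ and ordered $t_{i_k}$, the sign of $\det(K_F(x_l,t_{i_k}))$ depends only on $m$. Andr\'eief's identity then makes every minor of $G_n(\nu_n)$ positive. Gantmacher--Krein gives a simple smallest eigenvalue, which is where uniqueness of $b$ comes from (positive definiteness does not give it). It also gives an eigenvector with $\sgn b_k=(-1)^{k-1}$. With that alternation, Proposition~\ref{prop:orthogonality} builds the test function from $\varphi_1,\dots,\varphi_m$ only, with $\sum_{k\le m}b_kc_k=0$ and prescribed zeros at $m-2$ points. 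Expanding the relevant determinant along the row $(b_1,\dots,b_m)$ gives a sum of same-signed terms $|b_i|\cdot(\text{minor})$. Hence the function has no other zeros in $X$, and its zeros are simple. Combining eigenvectors of a repeated eigenvalue does not replace this construction.

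In case (i), your Haar property for every finite node set, with multiplicity in $x$, would yield the needed strict sign-regularity \eqref{ssr} by continuity on the connected sets of ordered tuples. You never draw or use this consequence, though. Also, adding the kernels coming from $\mu_+$ and $\mu_-$ does not preserve sign-regularity of order $\ge2$. You should treat $\{(s-x)^{-1}\}_{s\notin X}$ as one system, as in Proposition~\ref{prop:etp}, where $(s_k-x)^m(s_k-y)^m>0$.

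\emph{Case (ii) is not proved.} You flag it as the main obstacle, and neither $(\log F)'$ nor Schoenberg's continuous kernels $\Lambda(x-y)$ says anything about $K_F$. Note that the correct sign pattern here is $\varepsilon_m=(-1)^{m(m-1)/2}$, not total positivity. The paper translates so that the largest zero is at $0$. Then \eqref{der K} gives $K_F(x,y)=\sum_{i,j\ge0}a_{i+j+1}x^iy^j$ on $(0,r_{\mathrm p})^2$, a Hankel array in the Taylor coefficients. Reversing columns turns its minors into minors of the Toeplitz matrix $(a_{i-j})$, which are controlled by total positivity of the P\'olya frequency sequence. Concretely, the paper applies Leibniz's rule to the Wronskian $W_l(x,y)$. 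It writes each Taylor coefficient as $w_{l;n,m}=\sum_{\boldsymbol u,\boldsymbol v}W(\boldsymbol u)W(\boldsymbol v)D(\boldsymbol u,\boldsymbol v)$, with Vandermonde weights $W(\boldsymbol u)>0$ and $(-1)^{l(l-1)/2}D(\boldsymbol u,\boldsymbol v)$ a Toeplitz minor that is positive because $F$ is non-rational.

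Finally, your claim that the $t_k'$ avoid the nodes $t_k$ is false. The bound $2n-1$ on the zeros of $\prod_k(x-t_k)\varphi(\nu_n;x)$ still allows a double zero at some $t_j$. Moreover, $t_1'$ moves continuously with $\nu_n$. For $n=2$, $a<t_1$, and two-point measures (whose zero lies between the two support points), sliding the support from the left of $t_1$ to its right forces $t_1'=t_1$ at some stage. The claim is also unnecessary: $(P_nF-Q_n)/V_n=\varphi(\nu_n;\cdot)/\prod_k(\cdot-t_k')$ is analytic even when $V_n$ has double zeros.
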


\begin{figure}[ht!]
    \centering
    \includegraphics[width=.9\linewidth]{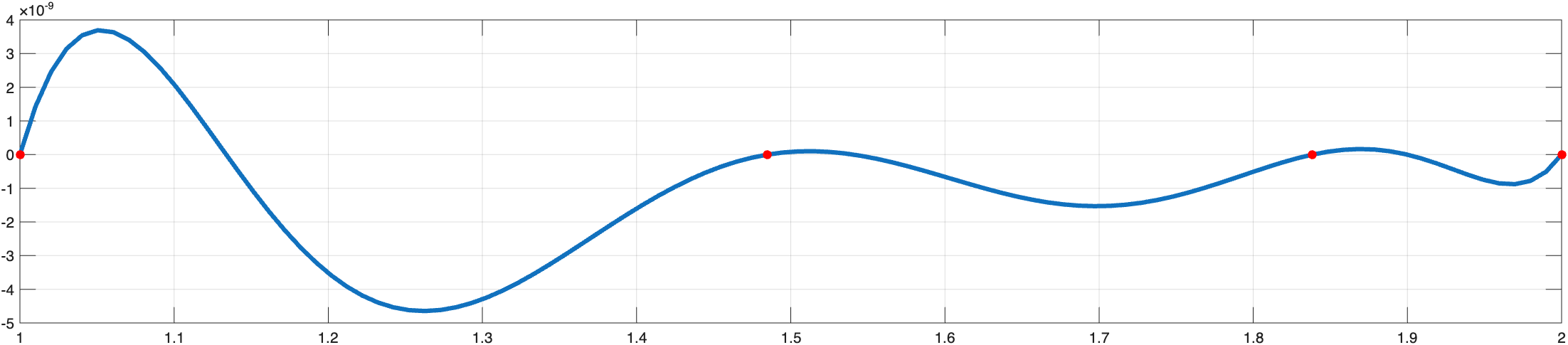} \\
    \includegraphics[width=.9\linewidth]{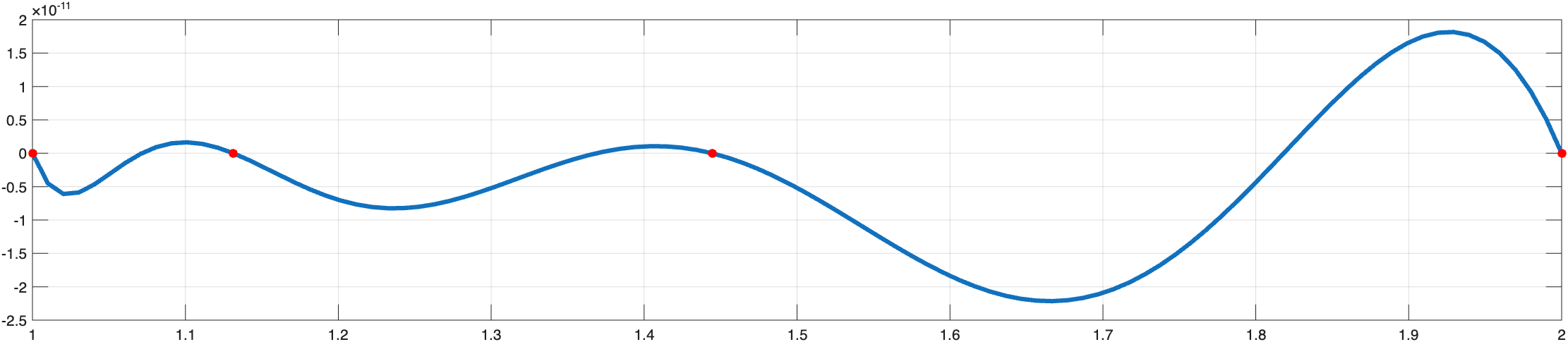}
    \caption{\small The error \( R_4(x)-f(x)\) for \( f(x)=e^x/(10+x)\) (upper graph) and \( f(x) = \ln x- \ln (1+x)\) (lower graph). Red dots are the corresponding interpolation sets \( T_4 \) (chosen through a greedy algorithm as described before). The measure \( \nu_4 \) is a sum of delta masses on 100 equally spaced points on \( [1,2]\).}
    \label{fig:step 1}
\end{figure}

As observed in \cite[Proposition~3.1]{MR3805855}, the AAA algorithm is invariant under linear transformations. In particular, Theorem~\ref{thm:1} applies to affine transformations of the Nevanlinna-Pick functions and P\'olya frequency series as well. On the other hand, multi-point Pad\'e approximants are invariant not only under affine but also under linear fractional transformations. This additional invariance  is particularly useful because every Nevanlinna-Pick function covered  by Theorem~\ref{thm:1} can be transformed into a Markov function by a suitable linear fractional transformation. Since the convergence theory for multi-point Pad\'e approximants to Markov functions is well developed (see \cite[Theorem~1]{GL}, \cite[Lemma~6.1.2]{StahlTotik}, and \cite[Theorem~2]{St00}), one readily obtains the following result.

\begin{corollary}
\label{c2}
Let \( F(z) \) be a non-rational Nevanlinna-Pick function \eqref{N-P} associated with a finite Borel measure \( \mu \) and let \( [a,b] \) be an interval disjoint from \( \supp\,\mu\). Let \( R_n(z) \) be the AAA approximant corresponding to some pair \( (\nu_n,T_n) \) as in \eqref{nu_n and T_n}. Then, \( R_n(z)\) is irreducible, that is, \( P_n(z) \) and \( Q_n(z) \) have no common zeros. Moreover, all the zeros of \( P_n(z)\), equivalently the poles of \( R_n(z)\), belong to \( (-\infty,a_\mathrm{max}] \cup [b_\mathrm{max},\infty) \), where \( (a_\mathrm{max}, b_\mathrm{max}) \) is the largest interval containing \( [a,b] \) that is disjoint from \( \supp\,\mu \). 

Furthermore, let \( \{ R_n(z) \}\) be a sequence of AAA approximants associated with a sequence of pairs \( \{(\nu_n,T_n)\}_{n\ge 1}\) as above. Then, for every compact set \( K \subset \C\setminus \big((-\infty,a_\mathrm{max}] \cup [b_\mathrm{max},\infty) 
\big)\), there exists a constant \( 0<q_K<1 \) such that
\[
|F(z)-R_n(z)| \leq q_K^n
\]
for all sufficiently large \(n\) and all $z\in K$.
\end{corollary}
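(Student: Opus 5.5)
The plan is to reduce Corollary~\ref{c2} to the classical theory of multi-point Pad\'e approximants to Markov functions, using Theorem~\ref{thm:1}(i) and the invariance of Pad\'e interpolation under linear fractional maps.

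\textbf{Step 1: move the interval to $\infty$.} Since \( F \) is non-rational and \( (a_\mathrm{max},b_\mathrm{max})\cap\supp\,\mu=\emptyset \), I fix a point \( c\in(a_\mathrm{max},b_\mathrm{max}) \), for instance \( c\in[a,b] \) or near it. I then apply the map \( w=1/(z-c) \). Since \( F \) is real and increasing on \( (a_\mathrm{max},b_\mathrm{max}) \), the function
\[
G(w):=\frac{F(c+1/w)-F(c)}{\;\cdot\;}
\]
should be chosen so that \( -G \), or a suitable normalization of \( F(z)-F(c) \) divided by \( z-c \), becomes a Markov function \( \widehat\sigma(w) \). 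Its measure \( \sigma \) is supported on the image of \( \supp\,\mu\cup\{\infty\} \), which is a compact subset of the real line. The complement \( (-\infty,a_\mathrm{max}]\cup[b_\mathrm{max},\infty)\cup\{\infty\} \) maps onto a compact interval \( \Delta \). Concretely, if \( F(c)\neq 0 \) is an issue, I instead use \( 1/(F(z)-F(c)) \), which is again (minus) Nevanlinna-Pick with a simple pole at \( c \). Subtracting that pole gives a Nevanlinna-Pick function analytic across \( (a_\mathrm{max},b_\mathrm{max}) \) and vanishing suitably at \( \infty \) after the M\"obius change, hence a Markov function of \( w \) whose measure lives on a compact set \( \Delta\subset\R \).

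\textbf{Step 2: transport the interpolation.} By Theorem~\ref{thm:1}(i), \( (P_nF-Q_n)/V_n \) is analytic near \( [a,b] \), with \( 2n-1 \) interpolation points in \( [a,b] \). Under the M\"obius map, and after the algebraic manipulations of Step 1, \( R_n \) becomes a rational function of type \( (n-1,n) \) or \( (n-1,n-1) \) in \( w \). It satisfies \( 2n \) (resp.\ \( 2n-1 \)) interpolation conditions to the Markov function at points of a compact subset of \( \C\setminus\Delta \); the extra condition at \( w=0 \) comes from the pole at \( c \).

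\textbf{Step 3: invoke the classical theory.} The standard orthogonality argument for multi-point Pad\'e approximants of Markov functions says the denominator is orthogonal with respect to \( d\sigma/v_n \), a varying measure of constant sign on \( \Delta \). This yields three conclusions: the denominator has exactly full degree; its zeros are simple and lie in the convex hull of \( \Delta \); and there is no common zero with the numerator, since otherwise the degree drops and contradicts orthogonality (cf.\ \cite[Lemma~6.1.2]{StahlTotik}). Pulling back gives the irreducibility and pole location claims. For convergence, the interpolation points stay in a fixed compact set (the image of \( [a,b] \)) separated from \( \Delta \). So \cite[Theorem~1]{GL} and \cite[Theorem~2]{St00}, which give convergence for Markov functions with interpolation nodes in a compact set disjoint from \( \Delta \) (uniform geometric bound via the Hermite error formula and Stieltjes-type estimates), yield \( |F-R_n|\le q_K^n \) on compacts off \( \Delta \), and this transfers back.

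\textbf{Main obstacle.} The delicate step is Step 1–2: choosing the transformation so that both \( F \) and \( R_n \) transform compatibly, keeping exact types and interpolation counts, and handling possible degeneracies. Such degeneracies include a vanishing \( b_k \), which reduces the number of genuine interpolation conditions at the \( t_k \) and must be compensated using the extra points \( t'_k \) from Theorem~\ref{thm:1}. I would first try working directly with \( 1/(F-F(c)) \) with \( c \) a zero of the error. I would also verify that the error formula in Theorem~\ref{thm:1} supplies enough conditions even when some \( b_k=0 \).
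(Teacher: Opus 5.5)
Your overall plan is the paper's: a M\"obius change of variable turning $F$ into a constant plus a Markov function, Theorem~\ref{thm:1} supplying $2n-1$ nodes, orthogonality of the transformed denominator against a constant-sign varying measure, then Gonchar--L\'opez/Stahl--Totik for the geometric rate. However, the step you flag as the main obstacle is never carried out, and the concrete options you offer for it would fail.

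\textbf{The missing reduction.} No normalization is needed. For $c\in(a_\mathrm{max},a)$ one has directly
\[
F(c+1/w)-F(c)=\frac{B}{w}+\int\frac{(1+u^2)\,d\mu(u)}{(u-c)^2\bigl(w-(u-c)^{-1}\bigr)}=\widehat\sigma_c(w),
\]
where $\sigma_c\ge 0$ is a mass $B$ at $0$ plus the push-forward of $(1+u^2)(u-c)^{-2}d\mu(u)$ under $u\mapsto (u-c)^{-1}$. It is supported in $I_c=[-(c-a_\mathrm{max})^{-1},(b_\mathrm{max}-c)^{-1}]$. So your $G$ is right once its denominator is deleted.

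\textbf{Why your alternatives fail.}
Dividing additionally by $z-c$, i.e.\ multiplying by $w$, gives $\sigma_c(\R)+\int x\,d\sigma_c(x)/(w-x)$. This is a Markov function of the \emph{signed} measure $x\,d\sigma_c(x)$ as soon as $\supp\,\mu$ meets both $(-\infty,a_\mathrm{max}]$ and $[b_\mathrm{max},\infty)$. That destroys the constant-sign orthogonality your Step~3 needs.

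Passing to $-1/(F-F(c))$ replaces $R_n$ by $-1/(R_n-F(c))$. The new error $(F-R_n)/\bigl((F-F(c))(R_n-F(c))\bigr)$ has a pole at $c$ unless $c$ is a double node, so it is not a Pad\'e-type error there. Subtracting the same pole term from both functions does not change this.

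Finally, $c$ should not lie in $[a,b]$. Otherwise part of $[a,b]$ is sent through $w=\infty$ and nodes can sit at or near infinity. Both the type bookkeeping and the contour argument behind the orthogonality would then have to be redone on the sphere. Your ``extra condition at $w=0$'' is also misplaced: $w=0$ corresponds to $z=\infty$, while $z=c$ goes to $w=\infty$.

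\textbf{Transport with the paper's choice.} The transport is immediate and nothing extra appears. Set $P_n^c(w)=w^{n-1}P_n(c+1/w)$, $Q_n^c(w)=w^{n-1}Q_n(c+1/w)$ and $V_n^c(w)=w^{2n-1}V_n(c+1/w)$. These keep type $(n-1,n-1)$, and all $2n-1$ nodes land in $[(b-c)^{-1},(a-c)^{-1}]\subset(0,\infty)$, which is disjoint from $I_c$. Moreover $(P_n^cF_c-Q_n^c)/V_n^c=w^{-n}\,(P_nF-Q_n)(c+1/w)/V_n(c+1/w)$ is analytic near that interval.

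Your worry about vanishing $b_k$ is moot. The proof of Theorem~\ref{thm:1} gives $b_k\neq0$ by sign alternation, and in any case $P_nF-Q_n=\prod_i(z-t_i)\sum_k b_kK_F(z,t_k)$ vanishes at every $t_k$.

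\textbf{The degeneracy that does need care.} It is a common zero of $P_n^c$ and $Q_n^c$, possibly also of $V_n^c$. After cancelling $n-\tilde n$ of them you must check that $\deg\tilde V_n\ge n+\tilde n-1$. Then $s^k(A_c\tilde P_n-\tilde Q_n)/\tilde V_n$ still vanishes to order at least two at infinity for $0\le k\le n-2$, so $\tilde P_n$ remains orthogonal to all $x^k$, $k\le n-2$, with respect to $d\sigma_c/\tilde V_n$. This count is what turns your ``otherwise the degree drops'' into a proof that $\tilde n=n$. It yields both irreducibility and the location of the poles.

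For the rate, Gonchar--L\'opez (or the $n$-th root result of Stahl--Totik used in the paper) is enough. Stahl's strong asymptotics would additionally need Szeg\H{o}-type hypotheses on $\sigma_c$.
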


The proof of Corollary~\ref{c2} essentially consists of showing how the results of \cite{GL} can be adapted to the present setting. If, in addition, we knew that the transformed Nevanlinna-Pick function is a Markov function of a Szeg\H{o} measure, then we could use \cite{St00} to strengthen the last result of Corollary~\ref{c2} by deriving strong asymptotics for the error of approximation.

Similarly, the convergence of the multi-point Pad\'e approximants to P\'olya frequency series with finitely many zeros and poles  is known \cite{MR1370511} when the interpolating points are distributed on a given interval; see also \cite{Frank3,Frank1,Frank2} for further results on the approximation of the exponential function. At present, no such result is available for an arbitrary P\'olya frequency series (however, convergence of the classical Pad\'e approximants, i.e., when all the interpolation points are at the origin, is  known \cite{MR276452}). In view of Theorem~\ref{thm:1}, the following corollary is  an immediate restatement of \cite[Theorems~2.2 and~2.9]{MR1370511}.

\begin{corollary}
Let \( F(z) \) be a P\'olya frequency series of the form
\[
F(z) = e^z \frac{\prod_{i=1}^k(1+\alpha_i z)}{\prod_{i=1}^l(1-\beta_i z)},
\]
where \( k,l \) are natural numbers. Given any sequence of pairs \( \{(\nu_n,T_n)\}_{n\ge 1}\) as in \eqref{nu_n and T_n} for a fixed interval \( [a,b] \), let \( \{ R_n(z) \}\) denote the corresponding sequence of AAA approximants. Then, \( R_n(z)\) is irreducible for all sufficiently large \( n \). Moreover,
\[
\lim_{n\to\infty} R_n(z) = F(z)
\]
locally uniformly in \( \C\setminus\cup_{i=1}^l\{1/\beta_i \} \). Furthermore, if \( F(z)=e^z \), then for every compact set \( K \) there exist constants \( 0<c_K<C_K<\infty \) such that
\[
c_K|V_n(z)| \leq \frac{(2n-2)!(2n-1)!}{(n-1)!(n-1)!}|e^z-R_n(z)| \leq C_K|V_n(z)|
\]
for all sufficiently large \( n \)  and all \( z\in K \), where \( V_n(z) \) is defined by \eqref{cot1}.
\end{corollary}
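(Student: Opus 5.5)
The plan is to reduce the statement to known results on multi-point Pad\'e approximants with interpolation nodes in a fixed interval. Theorem~\ref{thm:1}(ii) supplies the reduction. First I check its hypotheses. The function \( F(z)=e^z\prod_{i=1}^k(1+\alpha_iz)/\prod_{i=1}^l(1-\beta_iz) \) has the form \eqref{PFS} with \( c=1 \), \( \lambda=0 \), \( \gamma=1 \), and finitely many \( \alpha_i,\beta_i>0 \). It is not rational because of the factor \( e^z \). Its largest zero is \( r_{\mathrm z}=-1/\max_i\alpha_i \) and its smallest pole is \( r_{\mathrm p}=1/\max_i\beta_i \). The interval \( [a,b] \) is fixed and, as in Theorem~\ref{thm:1}(ii), I take it to satisfy \( [a,b]\subset(r_{\mathrm z},r_{\mathrm p}) \); I would state this as part of the standing assumption.

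With the hypotheses in place, Theorem~\ref{thm:1} gives, for every \( n \), uniqueness of \( R_n=Q_n/P_n \). It also gives points \( t_1',\dots,t_{n-1}'\in(a,b) \) such that \( (P_nF-Q_n)/V_n \) is analytic near \( [a,b] \). Here \( \deg P_n,\deg Q_n\le n-1 \) and \( \deg V_n=2n-1 \). Hence \( Q_n/P_n \) is, by definition, the multi-point Pad\'e approximant of type \( (n-1,n-1) \) to \( F \) with the \( 2n-1 \) interpolation nodes given by the zeros of \( V_n \), counted with multiplicity, all lying in \( [a,b] \). The linearized interpolation problem determines \( Q_n/P_n \) uniquely as a rational function: two solutions \( (p,q) \) and \( (\tilde p,\tilde q) \) give \( p\tilde q-\tilde pq \), a polynomial of degree at most \( 2n-2 \) divisible by \( V_n \), hence zero. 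Therefore \( R_n \) coincides with the approximant studied in \cite{MR1370511} for the node array \( \{V_n\} \).

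Next I invoke \cite[Theorem~2.2]{MR1370511}. It applies because the nodes of \( V_n \) stay in the fixed interval \( [a,b] \), which avoids the zeros and poles of \( F \). That theorem gives that the Pad\'e denominators eventually have exact degree \( n-1 \), with no common zeros with the numerators; this is irreducibility for large \( n \). It also gives locally uniform convergence to \( F \) off the poles \( 1/\beta_i \).

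For \( F=e^z \) (so \( k=l=0 \)), I apply \cite[Theorem~2.9]{MR1370511}, which gives two-sided asymptotics of the error in terms of \( V_n \). The mechanism is the Hermite-type representation \( e^z-R_n(z)=V_n(z)\,\frac{1}{2\pi\ic}\oint\frac{P_n(s)e^s\,ds}{P_n(z)V_n(s)(s-z)} \). As \( n\to\infty \), all nodes and all zeros of \( P_n \) collapse relative to the scale of \( K \), since the zeros of \( P_n \) escape to infinity at rate \( \sim n \). The contour integral then behaves like the classical Pad\'e constant \( \frac{(n-1)!(n-1)!}{(2n-2)!(2n-1)!} \), up to bounded factors uniformly on \( K \). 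This produces the constants \( c_K,C_K \).

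The main obstacle is bookkeeping rather than new analysis. One must check that the hypotheses of \cite{MR1370511} — nodes in a fixed compact interval, arbitrary otherwise, with multiplicities allowed — cover the nodes \( t_k,t_k' \) produced by Theorem~\ref{thm:1}. One must also check that the normalization there, monic \( V_n \), matches \eqref{cot1}, so that the constant in the two-sided bound is exactly the one displayed.
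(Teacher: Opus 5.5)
Your proposal is correct and follows the paper's own argument. Theorem~\ref{thm:1}(ii) identifies \( R_n \) as the type \( (n-1,n-1) \) multi-point Pad\'e approximant whose \( 2n-1 \) nodes are the zeros of \( V_n \) in \( [a,b] \), and the corollary is then a direct restatement of \cite[Theorems~2.2 and~2.9]{MR1370511}. Your uniqueness check for the linearized interpolation problem is appropriate, and so is your remark that \( [a,b]\subset(r_{\mathrm z},r_{\mathrm p}) \) must be assumed, which the paper leaves implicit.
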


\section{Sign-Regular Kernels}

Let \( K(x,y)\) be an infinitely differentiable kernel defined on a rectangle \( X\times Y \), where \( X,Y \) are open intervals. We recall the following definitions from \cite[Section~2.1]{Karlin}. The kernel \( K(x,y)\) is said to be \emph{strictly sign-regular} if, for every natural number \( m \), there exists \( \varepsilon_m\in\{-1,1\} \) such that
\begin{align}
\label{ssr}
\varepsilon_m\det\big(  K(x_l,y_k) \big)_{l,k=1}^m >0
\end{align}
whenever \( x_1<x_2<\ldots<x_m\) and \( y_1<y_2<\ldots <y_m\), where  \(x_i\in X\) and \( y_i\in Y \). When \( \varepsilon_m=1 \) for every $m$, the kernel \( K(x,y)\) is called \emph{strictly totally positive}. For each $m$, define the {\it Wronskian determinant} of  \( K(x,y) \) by
\begin{align}
\label{Wm}
W_m(x,y) := \det\left( K^{(i,j)}(x,y) \right)_{i,j=0}^{m-1}, \quad K^{(i,j)}(x,y) := \frac{\partial^{i+j}}{\partial x^i\partial y^j} K(x,y)\,.
\end{align}
 The kernel \( K(x,y)\) is said to be \emph{extended sign-regular} if, for all natural numbers \( m \), there exists  \( \varepsilon_m\in\{-1,1\} \) such that  
\begin{align}
\label{wron}
\varepsilon_mW_m(x,y)>0,
\end{align}
 for all \( x\in X,y\in Y\).  Again, the kernel \( K(x,y)\) is called \emph{extended totally positive} if \( \varepsilon_m=1\) for all \( m \). The extended sign-regularity implies strict sign-regularity, i.e., \eqref{wron} implies \eqref{ssr}, see \cite[Theorem~~2.6]{Karlin}.

Sign-regular kernels naturally give rise to \emph{T-systems} (Chebysh\"ev systems). Indeed, fix a natural number \( n \), points \( y_1<y_2<\ldots <y_n \), and define
\[
\varphi_k(x) := K(x,y_k), \quad k\in\{1,\ldots,n\}.
\]
Then, \eqref{ssr} with \( m=n \) says that \( \varphi_1(x),\ldots,\varphi_n(x)\) is a T-system by definition, see \cite[Definition~III.1.3]{Pinkus}. In fact, since we require \eqref{ssr} to hold for all \( m \), it also follows that
\begin{align}
\label{descartes}
\varepsilon_m\det\big(\varphi_{i_l}(x_k)\big)_{l,k=1}^m>0
\end{align}
for every \( 1\leq m\leq n \), every increasing sequence of indices \( 1\leq i_1<i_2<\ldots<i_m\leq n \), and every choice of points \( x_1<x_2<\ldots<x_m \). That is, every sub-collection of \( \varphi_1(x),\ldots,\varphi_n(x)\) is itself a T-system. A linear combination of the form
\begin{align}
\label{fg1}
\varphi(x) = c_1\varphi_1(x) + \cdots + c_n\varphi_n(x), \quad c_k\in\R,
\end{align}
is called a generalized polynomial. It follows from \eqref{ssr}, see \cite[Proposition~III.1.3]{Pinkus}, that every non-trivial generalized polynomial \( \varphi(x)\) can have at most \( n-1 \) zeros on \( X \). If, in addition,  \eqref{wron} holds, then every non-trivial \( \varphi(x)\) has at most \( n-1 \) zeros on \( X \) counted  with multiplicity, see \cite[Proposition~III.1.6]{Pinkus} (recall that \( K(x,y)\) is infinitely differentiable by assumption and therefore so are the generalized polynomials).

The following observations will be useful in the proof of the forthcoming proposition. Take functions \( \varphi_{i_1}(x),\ldots,\varphi_{i_m}(x)\) where $i_1<\ldots<i_m$ and fix points \( x_1<x_2<\ldots<x_{m-1} \). Assume \eqref{ssr} and consider the function
\begin{align}
\label{poly phi}
\phi(x) = \det \begin{pmatrix} \varphi_{i_1}(x_1) & \varphi_{i_2}(x_1) & \cdots & \varphi_{i_m}(x_1) \\ \vdots & \vdots & \ddots & \vdots \\ \varphi_{i_1}(x_{m-1}) & \varphi_{i_2}(x_{m-1}) & \cdots & \varphi_{i_m}(x_{m-1}) \\ \varphi_{i_1}(x) & \varphi_{i_2}(x) & \cdots & \varphi_{i_m}(x) \end{pmatrix}\,. 
\end{align}
Since $\phi(x)$ is a non-trivial  linear combination of \( m \) functions forming a T-system, it
has exactly \( m-1 \) zeros, namely \( x_1,\ldots,x_{m-1} \). If we additionally assume \eqref{wron}, then these zeros must necessarily be simple. In particular, \( \phi^\prime(x_k) \neq 0 \) for each \( k\in\{1,\ldots,m-1 \} \). Furthermore, 
\[
\sgn\, \phi^\prime(x_k) = \sgn \, \det \begin{pmatrix} \varphi_{i_1}(x_1) & \varphi_{i_2}(x_1) & \cdots & \varphi_{i_m}(x_1) \\ \vdots & \vdots & \ddots & \vdots \\ \varphi_{i_1}(x_{m-1}) & \varphi_{i_2}(x_{m-1}) & \cdots & \varphi_{i_m}(x_{m-1}) \\ \varphi_{i_1}^\prime(x_k) & \varphi_{i_2}^\prime(x_k) & \cdots & \varphi_{i_m}^\prime(x_k) \end{pmatrix},
\]
which implies that
\[
\sgn\, \phi^\prime(x_k) = \sgn \, \det \begin{pmatrix} \varphi_{i_1}(x_1) & \varphi_{i_2}(x_1) & \cdots & \varphi_{i_m}(x_1) \\ \vdots & \vdots & \ddots & \vdots \\ \varphi_{i_1}(x_{m-1}) & \varphi_{i_2}(x_{m-1}) & \cdots & \varphi_{i_m}(x_{m-1}) \\ \varphi_{i_1}(x_k+h) & \varphi_{i_2}(x_k+h) & \cdots & \varphi_{i_m}(x_k+h) \end{pmatrix}
\]
for all sufficiently small \( h>0 \). The sign of the last determinant is determined by \eqref{descartes} and is independent of the choice of the increasing indices \( i_1,\ldots,i_m \). 

\begin{proposition}
\label{prop:orthogonality}
Assume \( K(x,y) \) satisfies \eqref{wron}. Define 
\[
\varphi_*(x) := a_1\varphi_1(x) + \cdots + a_n\varphi_n(x),
\]
where  \( a_1,\ldots,a_n\) are fixed nonzero real numbers satisfying
\begin{align}
\label{sign ak}
\sgn\, a_k = (-1)^{k-1}, \quad k\in\{1,\ldots,n\}.
\end{align}
Let \( \nu \) be a Borel measure on \( [a,b]\subset X \) whose support contains at least \( n \) points. 
Assume that 
\begin{align}
\label{ortho}
\int \varphi_*(x)\varphi(x) d\nu(x) =0    
\end{align}
for every generalized polynomial \( \varphi(x) \) whose coefficients \( c_1,\ldots,c_n\) satisfy
\begin{align}
\label{vector ortho}
a_1c_1 + \cdots + a_nc_n =0.
\end{align}
Then, \( \varphi_*(x) \) has exactly \( n-1 \) zeros that all belong to the interval \( (a,b) \) and  are all simple.
\end{proposition}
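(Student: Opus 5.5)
Since $\varphi_*$ is a non-trivial generalized polynomial and \eqref{wron} holds, $\varphi_*$ has at most $n-1$ zeros in $X$ counted with multiplicity. It therefore suffices to produce $n-1$ sign changes of $\varphi_*$ inside $(a,b)$. Each sign change gives a zero of odd multiplicity in $(a,b)$, hence a distinct zero there. Having $n-1$ of them exhausts the multiplicity budget, so there can be no further zeros anywhere in $X$, and every zero found is simple. The whole proof thus reduces to showing that $\varphi_*$ changes sign at least $n-1$ times on $\supp\,\nu$, in the sense of strictly interlacing points of $\supp\,\nu$ where $\varphi_*$ alternates in sign.

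\textbf{Argument by contradiction.} Suppose $\varphi_*$ has only $m-1$ sign changes in $(a,b)$ with $m\le n-1$, located at points $x_1<\dots<x_{m-1}$ in $(a,b)$. Then $\varphi_*$ has constant sign between consecutive $x_j$ on $[a,b]$. Following the usual orthogonal-polynomial argument, I build a generalized polynomial $\varphi$ with three properties:
\begin{itemize}
\item[(a)] $\varphi$ changes sign exactly at $x_1,\dots,x_{m-1}$ and nowhere else in $[a,b]$;
\item[(b)] the coefficients of $\varphi$ satisfy \eqref{vector ortho};
\item[(c)] $\varphi\varphi_*\ge 0$ on $[a,b]$, and $\varphi\varphi_*$ is not $\nu$-a.e. zero.
\end{itemize}
Properties (b) and (c) together contradict \eqref{ortho}. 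For (c) I need $\supp\,\nu$ to contain a point where $\varphi_*\varphi\neq0$. This holds because $\varphi_*$ and $\varphi$ have finitely many zeros while, if $\supp\,\nu$ were contained in them, one could argue directly. Note also that $\varphi_*\ne0$ $\nu$-a.e. is plausible, since otherwise $\varphi_*$ would vanish at $\ge n$ points.

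\textbf{Construction of $\varphi$.} Take $m+1\le n$ indices and consider the determinant $\phi$ from \eqref{poly phi}. I use $m$ functions with the extra node chosen so that \eqref{vector ortho} holds. Concretely, set
\[
\varphi(x)=\det\begin{pmatrix}\varphi_{i_1}(x_1)&\cdots&\varphi_{i_{m+1}}(x_1)\\ \vdots&&\vdots\\ \varphi_{i_1}(x_{m-1})&\cdots&\varphi_{i_{m+1}}(x_{m-1})\\ a_{i_1}&\cdots&a_{i_{m+1}}\\ \varphi_{i_1}(x)&\cdots&\varphi_{i_{m+1}}(x)\end{pmatrix}.
\]
Its coefficient vector $c$ is orthogonal to $(a_{i_1},\dots,a_{i_{m+1}})$, since the row $a$ appears twice when we pair with $a$. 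So (b) holds.

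Expanding along the $a$-row, $\varphi$ is a combination $\sum_j \pm a_{i_j}\,\phi_j(x)$. Here each $\phi_j$ is the $m\times m$ determinant \eqref{poly phi} built on indices omitting $i_j$, and it vanishes exactly at $x_1,\dots,x_{m-1}$. The alternating signs \eqref{sign ak} combine with the cofactor signs $(-1)^{j}$ so that all terms $\pm a_{i_j}\phi_j$ carry the same sign pattern. By \eqref{descartes} and the remark preceding the proposition, $\sgn\phi_j(x)$ on each interval $(x_k,x_{k+1})$ is independent of the chosen index set. So on each interval the terms do not cancel, and $\varphi$ is nonzero off $\{x_k\}$. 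By the derivative-sign remark, each $x_k$ is a simple zero of every $\phi_j$ with a common sign of the derivative, so $\varphi$ changes sign exactly at the $x_k$. Multiplying $\varphi$ by $\pm1$ then gives (a) and (c).

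\textbf{Main obstacle.} The delicate step is the sign bookkeeping: verifying that the cofactor signs and $\sgn a_k=(-1)^{k-1}$ really align, so that the $\phi_j$ add coherently rather than cancel. A second point is making precise ``sign changes on $\supp\,\nu$'', including the degenerate case where $\varphi_*$ vanishes on part of $\supp\,\nu$. If the determinant bookkeeping gets messy, I would instead pick $m+1$ consecutive indices and use the $m=0$ case (positivity of $\sum c_k\varphi_k$ with $c_k a_k$ of fixed sign) as a sanity check before doing the general expansion.
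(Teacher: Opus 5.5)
Your proposal follows the paper's proof. The paper also argues by contradiction with a generalized polynomial that satisfies \eqref{vector ortho} and whose only zeros are the odd-multiplicity zeros of $\varphi_*$ in $(a,b)$, and it rules out other zeros by expanding the same bordered determinant (first row $(a_1,\dots,a_m)$, remaining rows $\varphi_k(x_l)$) along the $a$-row using \eqref{sign ak} and \eqref{descartes}. Your explicit determinant for $\varphi$ is the closed form of the paper's solution of \eqref{choosing phi}. It has the small advantage of giving the sign of $\varphi$ on each gap $(x_k,x_{k+1})$ directly, so you can skip the paper's separate simplicity argument.

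One point must be fixed. The sign coherence you claim does not hold for an arbitrary index set: for example, indices $\{1,3\}$ with no nodes give $|a_1|\varphi_3-|a_3|\varphi_1$, which can vanish. Consecutive indices $i_j=j$ (the paper's choice $\varphi_1,\dots,\varphi_m$) are therefore required, not just a fallback. With them, every term $(-1)^{m+j}a_j\phi_j(x)$ has sign $(-1)^{m-1}\sgn\phi_j(x)$, which is independent of $j$.

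For (c), replace the vague remark with a count: the zero set of $\varphi\varphi_*$ on $[a,b]$ lies inside that of $\varphi_*$. That set has at most $n-1<\#\supp\,\nu$ points, so some support point has $\varphi\varphi_*\neq0$, and a neighborhood of it carries positive $\nu$-mass.
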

\begin{proof}
We begin by showing  that a generalized polynomial \( \varphi(x) \) with coefficients satisfying \eqref{vector ortho} can be prescribed to vanish at any  $j$ distinct points of $X$, where \(j\in\{1,\ldots, n-2\} \), without vanishing anywhere else on $X$. To prove this, fix distinct points \( x_1,\ldots,x_{m-2} \) on \( X \), \(  2\leq m\leq n \) (when \(m=2\), we fix no points). Since the linear system 
\begin{align}
\label{choosing phi}
\begin{cases}
a_1c_1 + \cdots + a_mc_m =0, \\ 
c_{m+1} = \cdots = c_n =0, \\
c_1\varphi_1(x_l) + \cdots + c_m\varphi_m(x_l) =0, \quad 1\leq l\leq m-2
\end{cases}
\end{align}
consists of $n-1$ homogeneous equations with $n$ unknowns $c_1,\ldots,c_n$, it admits a non-trivial solutions (when \( m=2\), the bottom row of \eqref{choosing phi} is vacuous). Let \( \varphi(x) \) be the generalized polynomial corresponding to a solution of \eqref{choosing phi}. Assume to the contrary that there exists \( x_{m-1} \in X\setminus\{x_1,\ldots,x_{m-2} \} \) such that \(\varphi(x_{m-1})=0 \). In this case
\begin{align}
\label{system}
\begin{pmatrix}
a_1 & a_2 & \cdots & a_m \\
\varphi_1(x_1) & \varphi_2(x_1) & \cdots & \varphi_m(x_1) \\
\vdots & \vdots & \ddots & \vdots \\
\varphi_1(x_{m-1}) & \varphi_2(x_{m-1}) & \cdots & \varphi_m(x_{m-1})
\end{pmatrix} 
\begin{pmatrix}
c_1 \\ c_2 \\ \vdots \\ c_m    
\end{pmatrix}
= \begin{pmatrix}
0 \\ 0 \\ \vdots \\ 0    
\end{pmatrix},
\end{align}
where, for convenience, we relabel the zeros so that \( x_1<\cdots<x_{m-1} \).  Expanding the determinant of the coefficient matrix along its first row and  multiplying by \( \varepsilon_{m-1} \) gives us
\begin{align}
\label{contradiction}
\varepsilon_{m-1}\sum_{i=1}^m |a_i| \, \det \big(\varphi_l(x_k)\big)_{l\neq i,k\in \{1,\ldots,m-1\}}>0,    
\end{align}
where we used \eqref{descartes} and \eqref{sign ak} for the last conclusion. Hence, the determinant is non-zero and \eqref{system} can have only the trivial solution contradicting the choice of the coefficients $c_1,\ldots,c_m$.  Thus, any non-trivial \( \varphi(x) \) satisfying \eqref{choosing phi} cannot have any other zeros on $X$ besides $x_1,\ldots,x_{m-2}$ (no zeros at all if \( m=2\)).

 Next, when \( m>2 \), we claim that all of these $m-2$ zeros are simple. Indeed, assume to the contrary that \( x_k \) is not a simple zero, i.e., \( \varphi^\prime(x_k) =0, 1\le k\le m-2 \). Then,
\begin{align}
\begin{pmatrix}
\label{system2}
a_1 & a_2 & \cdots & a_m \\
\varphi_1(x_1) & \varphi_2(x_1) & \cdots & \varphi_m(x_1) \\
\vdots & \vdots & \ddots & \vdots \\
\varphi_1(x_{m-2}) & \varphi_2(x_{m-2}) & \cdots & \varphi_m(x_{m-2}) \\
\varphi_1^\prime(x_k) & \varphi_2^\prime(x_k) & \cdots & \varphi_m^\prime(x_k)
\end{pmatrix} 
\begin{pmatrix}
c_1 \\ c_2 \\ \vdots \\ c_{m-1} \\ c_m    
\end{pmatrix}
= \begin{pmatrix}
0 \\ 0 \\ \vdots \\ 0 \\ 0     
\end{pmatrix}.    
\end{align}
Define functions \( \phi_l(x), l\in \{1,\ldots,m\} \), by 
\begin{align}
\label{poly phi-l}
\phi_l(x) = \det \begin{pmatrix} \varphi_{1}(x_1)&\cdots & \varphi_{l-1}(x_1) &\varphi_{l+1}(x_1)& \cdots & \varphi_{m}(x_1) \\ \vdots & \ddots&\vdots & \vdots &\ddots& \vdots \\ \varphi_{1}(x_{m-2}) &\cdot& \varphi_{l-1}(x_{m-2})&\varphi_{l+1}(x_{m-2}) & \cdots & \varphi_{m}(x_{m-2}) \\ \varphi_{1}(x) & \cdots&\varphi_{l-1}(x) & \varphi_{l+1}(x)&\cdots & \varphi_{m}(x) \end{pmatrix}\,,
\end{align}
(which is  \eqref{poly phi} with \( m \) replaced by \( m-1 \) and the corresponding indices \( \{i_1,\ldots,i_{m-1}\} \) given by \( \{1,\ldots,m\}\setminus\{l\} \)).  Then, expanding the determinant of the matrix in \eqref{system2} in the first row gives 
\begin{equation}\label{ff}
 \sum_{i=1}^m |a_i| \phi_i^\prime(x_k)=
\upsilon_k \sum_{i=1}^m |a_i| |\phi_i^\prime(x_k)|,
\end{equation}
where \( \upsilon_k=\sgn\,\phi_i^\prime(x_k) \) is independent of \( i \) by the observation made right before this proposition. The quantity in the right-hand side of \eqref{ff} is non-zero so, again, \eqref{system2} can have only the trivial solution contradicting the choice of the coefficients. Therefore, every \( \varphi(x) \) satisfying \eqref{choosing phi} can have only simple zeros as claimed. 

We now can conclude the proof of the proposition.  Since \( \varphi_*(x) \) is non-trivial, it cannot have more than $n-1$ zeros counted with multiplicity as explained after \eqref{fg1}. Suppose it has fewer that $n-1$ zeros of odd multiplicity on $(a,b)$. By the claims proved above, there is a generalized polynomial $\varphi(x)$ whose coefficients satisfy \eqref{vector ortho} and whose roots are  simple and precisely match the zeros of odd multiplicity of $\varphi_*(x)$. If $\varphi_*(x)$ has no roots of odd multiplicity, that $\varphi(x)$ is chosen to have no roots on $X$. Hence, the product $(\varphi_\ast\varphi)(x)$ keeps the same sign away from the roots of $\varphi_*(x)$. This leads to the contradiction with \eqref{ortho} since  the support of \( \nu \) has at least \( n \) points. 
\end{proof}

Another important observation is that Gram matrices of T-systems coming from strictly sign-regular kernels are \emph{strictly totally positive}; that is, all of their minors are positive, see \cite[Definition~III.2.3]{Pinkus}. Indeed, as in Proposition~\ref{prop:orthogonality}, let \( \nu \) be a Borel measure on \( [a,b] \subset X \) whose support contains at least \( n \) points. Define
\begin{align}
\label{Gram}
G_n(\nu) := \left(\int\varphi_k(x)\varphi_j(x)d\nu(x)\right)_{k,j=1}^n
\end{align}
to be the Gram matrix of the functions \( \varphi_1(x),\ldots,\varphi_n(x)\). Since \( \varepsilon_n^2=1 \) and the support of \( \nu \) has at least \( n \) points, Andr\'eief's identity and \eqref{descartes} readily yield that
\[
\det \, G_n(\nu) = \frac1{n!} \int \cdots \int \det\left(\varphi_i(x_k)\right)_{i,k=1}^n \det\left(\varphi_j(x_k)\right)_{j,k=1}^n \prod_{k=1}^n d\nu(x_k)>0.
\]
Moreover, Andr\'eief's identity and \eqref{descartes} also show that the same conclusion is true for every minor of \( G_n(\nu) \). Therefore, \( G_n(\nu) \) is indeed a strictly totally positive matrix.

It is known, see \cite[Theorem~III.2.8]{Pinkus}, that any \( n\times n\) strictly totally positive matrix has $n$ distinct eigenvalues that are all positive.  Moreover, \( (b_1,\ldots,b_n)^\mathsf{T} \), an eigenvector  corresponding to the smallest eigenvalue,  satisfies
\begin{align}
\label{sign bk}
b_k\neq 0, \;\; k\in\{1,\ldots,n\}, \quad \text{and} \quad \sgn\, b_l = (-1)^{l-1} \sgn \, b_1, \;\; l\in\{2,\ldots,n\}.    
\end{align}
Then, the following corollary is immediate.

\begin{corollary}
\label{cor:ortho} Let  the kernel \( K(x,y) \) satisfy \eqref{wron}
 and  \( (b_1,\ldots,b_n)^\mathsf{T} \) be an eigenvector corresponding to the smallest eigenvalue of \( G_n(\nu) \). Set
$
\varphi(\nu;x) = b_1\varphi_1(x) + \cdots + b_n\varphi_n(x)
$
and suppose that
\[
\int \varphi(\nu;x)\varphi(x) d\nu(x) =0
\]
for every generalized polynomial $\varphi(x)$ whose coefficients $c_1,\ldots,c_n$ satisfy $b_1c_1 + \cdots + b_nc_n =0$.
Then, \( \varphi(\nu;x) \) has exactly \( n-1 \)  zeros that are all simple and contained in \( (a,b) \).
\end{corollary}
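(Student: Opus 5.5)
The corollary should follow from Proposition~\ref{prop:orthogonality} applied with \( a_k := b_k \). The only work is to check that the eigenvector \( (b_1,\ldots,b_n)^\mathsf{T} \) meets the sign hypothesis \eqref{sign ak}. The other hypotheses (the kernel satisfies \eqref{wron}, \( \nu \) lives on \( [a,b]\subset X \) with at least \( n \) support points, and the orthogonality relation \eqref{ortho} holds under the linear constraint \eqref{vector ortho}) transfer verbatim from the assumptions of the corollary once \( a_k \) is replaced by \( b_k \).

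The first step is to argue that \( G_n(\nu) \) is strictly totally positive. This was shown just before the statement using Andr\'eief's identity together with \eqref{descartes}. Because the product \( \det(\varphi_{i_l}(x_k))\det(\varphi_{j_l}(x_k)) \) carries the factor \( \varepsilon_m^2=1 \), every minor is positive, and this holds whatever the signs \( \varepsilon_m \) in \eqref{wron} are.

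The second step is to invoke \cite[Theorem~III.2.8]{Pinkus}, in the form recorded in \eqref{sign bk}. It says that an eigenvector for the smallest eigenvalue of a strictly totally positive matrix has no zero entries and has strictly alternating signs. The smallest eigenvalue is simple, so this eigenvector is unique up to a scalar multiple. Since the conclusion of the corollary concerns only the zero set of \( \varphi(\nu;x) \), we may replace \( (b_k) \) by \( (-b_k) \) if needed. This normalizes \( b_1>0 \), which gives \( \sgn\, b_k=(-1)^{k-1} \) for all \( k \), i.e., \eqref{sign ak}. The orthogonality hypothesis is also unchanged under this sign flip, because both the constraint \( \sum b_kc_k=0 \) and the integral in \eqref{ortho} are homogeneous in \( (b_k) \).

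With \( a_k=b_k \), Proposition~\ref{prop:orthogonality} then gives that \( \varphi_*=\varphi(\nu;\cdot) \) has exactly \( n-1 \) zeros, all simple and all in \( (a,b) \). The main point needing care, rather than a real obstacle, is to make sure the cited sign-pattern result from Pinkus is stated for the eigenvector of the \emph{smallest} eigenvalue, which has \( n-1 \) sign changes. The eigenvector of the largest eigenvalue would instead have no sign changes, and it would not satisfy \eqref{sign ak}.
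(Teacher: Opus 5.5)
Your proposal is correct and is essentially the paper's argument. The paper also treats the corollary as immediate from Proposition~\ref{prop:orthogonality} with $a_k=b_k$: Andr\'eief's identity plus \eqref{descartes} make $G_n(\nu)$ strictly totally positive, so by \cite[Theorem~III.2.8]{Pinkus} the eigenvector of the smallest eigenvalue has nonzero, strictly alternating entries, and your normalization $b_1>0$ is harmless (the proposition's proof works equally for the opposite alternating pattern).
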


\section{Loewner Kernels}
\label{sec:loewner}

Our next goal is to provide examples of those functions \( f(x) \) whose associated Loewner kernel \eqref{Loewner kernel} is extended sign-regular on \( X\times X \) for some open interval \( X \).

We start by assuming that $K_f(x,y)$ is an extended totally positive kernel. Since \eqref{wron} implies \eqref{ssr}, we get that
\begin{align}
\label{loewner}
\det\big(  K_f(s_l,s_k) \big)_{l,k=1}^M >0    
\end{align}
for any \( s_1<s_2<\ldots<s_M \) and any natural number \( M \). By Loewner's theorem (see \cite[Theorems~1.6 and 5.1]{Simon}), \eqref{loewner} with \( > \) replaced by \( \geq  \) is equivalent    to \( f(x) \) being the restriction to \( X \) of a Nevanlinna-Pick function:
\begin{align}
\label{loewner function}
A+ Bz + \int\frac{1+zs}{s-z}d\mu(s),
\end{align}
where \( A \) is real, \( B \) is nonnegative, and \( \mu \) is a finite Borel measure supported on the real line whose support is disjoint from \( X \). Such functions \( f(x) \) are also known as matrix monotone or complete Bernstein functions. In short, extended total positivity of $K_f$ implies \eqref{loewner function} and \eqref{loewner function} implies \eqref{loewner} with \( > \) replaced by \( \geq  \). The converse statement that non-rational Nevanlinna-Pick functions give rise to extended totally-positive kernels essentially follows from the material in \cite[Section~3.1]{Karlin}. For completeness, we provide a short proof below.

\begin{proposition}
\label{prop:etp}
Let \( F(z) \) be a non-rational Nevanlinna-Pick function having the integral representation \eqref{loewner function}. Assume that the support of \( \mu \) is disjoint from some open interval \( X \). Then, the Loewner kernel \( K_F(x,y) \) is extended totally positive on \( X\times X \).
\end{proposition}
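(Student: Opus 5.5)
The plan is to compute the Loewner kernel explicitly from \eqref{loewner function} and to recognize it as a superposition of rank-one totally positive kernels. For \(x,y\in X\) we have
\[
\frac{1+xs}{s-x}-\frac{1+ys}{s-y} = \frac{(1+s^2)(x-y)}{(s-x)(s-y)},
\]
so that
\[
K_F(x,y) = B + \int \frac{(1+s^2)\,d\mu(s)}{(s-x)(s-y)}.
\]
Both terms have the form \(\int g_s(x)g_s(y)\,d\tau(s)\). The constant term \(B\) comes from the point \(s=\infty\) with \(g_\infty\equiv1\), and the other terms come from \(g_s(x)=(s-x)^{-1}\) with \(d\tau=(1+s^2)d\mu\). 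Since \(\supp\,\mu\cap X=\emptyset\), differentiation under the integral sign is legitimate. We get \(K_F^{(i,j)}(x,y)=\int g_s^{(i)}(x)g_s^{(j)}(y)\,d\tau(s)\) with \(g_s^{(i)}(x)=i!\,(s-x)^{-i-1}\), and \(g_\infty^{(i)}=\delta_{i0}\).

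Next, I will apply Andr\'eief's identity (continuous Cauchy--Binet) to \(W_m(x,y)\):
\[
W_m(x,y)=\frac1{m!}\int\!\cdots\!\int \det\big(g_{s_k}^{(i)}(x)\big)_{i,k}\det\big(g_{s_k}^{(j)}(y)\big)_{j,k}\prod_{k=1}^m d\tau(s_k).
\]
Symmetrizing turns this into an integral over ordered tuples \(s_1<\dots<s_m\), with the integrand equal to the product of the two determinants. The key computation is that
\[
D(x;s_1,\dots,s_m):=\det\big(i!\,(s_k-x)^{-i-1}\big)_{i,k}
\]
equals \(\prod_i i!\,\prod_k (s_k-x)^{-1}\) times a Vandermonde determinant in \(u_k=(s_k-x)^{-1}\). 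Up to a sign that depends only on \(m\), it is therefore \(\prod_k|s_k-x|^{-1}\prod_{k<l}(u_l-u_k)\). When \(x\) is replaced by \(y\in X\), the ordering of the \(u_k\) relative to the \(s_k\) is the same, because the map \(s\mapsto (s-x)^{-1}\) is decreasing on each component of \(\R\setminus X\) and the two components are arranged compatibly for all \(x\in X\). The signs of the factors \((s_k-x)\) also agree for \(x\) and \(y\). Hence \(D(x;\mathbf s)D(y;\mathbf s)\ge0\), with strict inequality when the \(s_k\) are distinct.

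When \(B>0\), the point \(\infty\) is handled in the same way. Its column is \((1,0,\dots,0)^\mathsf T\), which reduces the determinant to a Vandermonde-type determinant of size \(m-1\) (equivalently, the limit \(s\to\infty\) after rescaling), and the same sign agreement holds.

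Finally, positivity is obtained from non-rationality. Since \(F\) is not rational, \(\supp\,\mu\) is infinite, so \(\tau^{\otimes m}\) gives positive mass to tuples of distinct points. On such tuples the integrand is strictly positive, and therefore \(W_m(x,y)>0\) for every \(m\). This is exactly \eqref{wron} with \(\varepsilon_m=1\).

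The main obstacle is the sign bookkeeping: the support of \(\mu\) may lie on both sides of \(X\), and possibly \(\infty\) is present as well. I would handle this first by reducing it through an affine change of variables combined with \(s\mapsto (s-x)^{-1}\). Viewed on the circle \(\R\cup\{\infty\}\), the set \(\R\setminus X\) together with \(\infty\) is a single arc, and on this arc the map is monotone. This yields a consistent ordering that is independent of \(x\in X\).
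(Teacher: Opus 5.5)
Your proof is correct and follows the paper's argument: the representation \(K_F(x,y)=B+\int d\sigma(s)/((s-x)(s-y))\) with \(d\sigma=(1+s^2)d\mu\), differentiation under the integral, Andr\'eief's identity, a Vandermonde evaluation, and the infinite support of \(\mu\) for strict positivity. The differences are cosmetic. You treat \(B\) as an atom at \(s=\infty\), while the paper uses linearity of \(W_m\) in its \((1,1)\) entry; both give the extra term \(B\) times the principal minor on \(\{2,\dots,m\}\). Your sign bookkeeping can be bypassed, including the harmless slip that the sign of \(\prod_k(s_k-x)^{-1}\) depends only on \(m\) (it actually depends on how many \(s_k\) lie left of \(X\)). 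Indeed, \(\prod_{k<l}(u_l-u_k)=\prod_{k<l}(s_k-s_l)\prod_k(s_k-x)^{1-m}\) turns the integrand into \(\big(\prod_{i=0}^{m-1}i!\big)^2\prod_{k<l}(s_k-s_l)^2\prod_k[(s_k-x)(s_k-y)]^{-m}\ge0\), exactly as in the paper.
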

\begin{proof}
Let \( d\sigma(s) = (1+s^2)d\mu(s) \). Then,
\[
K_F(x,y): = B + \int\frac{d\sigma(s)}{(s-x)(s-y)}, \quad x,y\in X.
\] 
Clearly, $K_F(x,y)>0$, which proves \eqref{wron} for \( m=1\) and $\varepsilon_1=1$.  Assume first that \( B= 0 \).  Differentiating under the integral sign gives
\[
K_F^{(i,j)}(x,y) =  i!j!\int\frac{d\sigma(s)}{(s-x)^{i+1}(s-y)^{j+1}}.
\]
Since the factors \( i! \) and \( j! \) can be understood as coming from both left and right multiplication by the diagonal matrix of factorials, we get from Andr\'eief's identity that
\[
W_m(x,y) = \frac{G^2(m+1)}{m!} \int\cdots\int \det\left( (s_k-x)^{-i} \right)_{i,k=1}^m \det\left( (s_k-y)^{-j} \right)_{j,k=1}^m  \prod_{k=1}^m d\sigma(s_k),
\]
where \( G\) denotes the Barnes \( G \)-function. The formula for the Vandermonde determinant now gives that
\begin{align}
\det\left( \frac1{(s_k-x)^i} \right)_{i,k=1}^m & = \frac{\det\left( (s_k-x)^{m-i} \right)_{i,k=1}^m}{\prod_{k=1}^m (s_k-x)^m} = \frac{\prod_{1\leq i<j\leq m}(s_i-s_j)}{\prod_{k=1}^m (s_k-x)^m}
\end{align}
(notice the opposite order of the rows as compared to the standard Vandermonde determinant). Thus,
\[
W_m(x,y) = \frac{G^2(m+1)}{m!} \int\cdots\int\frac{\prod_{1\leq i<j\leq m}(s_i-s_j)^2}{\prod_{k=1}^m (s_k-x)^m(s_k-y)^m}\prod_{k=1}^m d\sigma(s_k),
\]
which is clearly positive as \( \sigma \) has support of infinite cardinality. Hence,  $K_F$ is extended totally positive  when $B=0$. Consider the case when $B>0$. In the formula for $W_m(x,y)$, $B$ appears only in the \( (1,1) \)-entry.  Observe that our arguments above remain valid if we replace \( K_F(x,y) \) with
\[
K_F^{(1,1)}(x,y) = \int\frac{d\sigma(s)}{(s-x)^2(s-y)^2},
\]
which shows that the principal minor corresponding to the indices \( \{2,\ldots,m\} \) of the determinant defining $W_m(x,y)$  is positive. Hence, expanding this determinant in the first row, we get  $W_m(x,y)>0$ even when \( B>0 \) so that $K_F$ is extended totally positive.
\end{proof}

Thus, Nevanlinna-Pick functions are exactly those whose Loewner kernels are extended totally positive. The technique we used to establish this claim extends to the exponential function as well. Indeed, if \( e(x)=e^x \), then
\[
K_e(x,y) = \int_0^1 e^{sx+(1-s)y} ds \quad \Rightarrow \quad K_e^{(i,j)}(x,y) = \int_0^1 s^i(1-s)^j e^{sx+(1-s)y} ds.
\]
Andr\'eief's identity and the Vandermonde determinant formula then yield
\begin{align}
W_m(x,y) & = \frac1{m!} \int_0^1\cdots\int_0^1 \det\left( s_k^{i-1} \right)_{i,k=1}^m \det\left( (1-s_k)^{j-1} \right)_{j,k=1}^m  \prod_{k=1}^m e^{s_kx+(1-s_k)y} ds_k \\
& = \frac{(-1)^{\frac{m(m-1)}2}}{m!} \int_0^1\cdots\int_0^1 \prod_{1\leq i<j\leq m}(s_j-s_i)^2  \prod_{k=1}^m e^{s_kx+(1-s_k)y} ds_k,
\end{align}
where the integrand is clearly a non-negative function.

As the above formula shows,  the sign pattern of the Wronskians for the exponential function is \( +--++--++--\cdots\).  A natural question is for which other functions the Wronskians share this sign pattern. To address it, we start with the observation that 
\[
K_f^{(i,j)}(x,y) = \int_0^1 s^i(1-s)^jf^{(i+j+1)}(sx+(1-s)y)ds.
\]
This identity and the formula for the beta function evaluated at natural numbers give
\begin{align}
\label{der K}
K_f^{(i,j)}(x,x) = \frac{i!j!}{(i+j+1)!} f^{(i+j+1)}(x) = i!j! a_{i+j+1}(x),
\end{align}
where \( a_n(x) \) are the Taylor coefficients at the origin of \( f(z+x)\) considered as a function of \( z \). Hence, the Wronskian \( W_m(x,y)\) along the diagonal \( x=y\) can be expressed as
\begin{align}
W_m(x,x) &= G^2(m+1) \det\big( a_{i+j+1}(x) \big)_{i,j=0}^{m-1} \\
& = (-1)^{\frac{m(m-1)}2}G^2(m+1)\det\big( a_{i-j+m}(x) \big)_{i,j=1}^m,
\end{align}
where the last equality is obtained by reversing the order of the columns in the determinant. Thus, if we want to have
\begin{align}
\label{sign pattern}
(-1)^{\frac{m(m-1)}2}W_m(x,x)>0,
\end{align}
which is the desired sign pattern, all Toeplitz determinants \( \det( a_{i-j+m}(x))_{i,j=1}^m \) must be positive for all \( m\geq 1 \).  In \cite{MR1545467}, Schoenberg proved that if 
\begin{align}
\label{property P}
\det\big( a_{i-j+m}(x) \big)_{i,j=1}^n>0, \quad m,n-1 \geq 0,
\end{align}
then \( \{ a_m(x) \}_{m\geq0} \) is a totally positive sequence and therefore, as explained in the introduction, the function \( f(z+x)\) must be a P\'olya frequency series \eqref{PFS}. In fact, as pointed out in footnote 2 on page 87 of \cite{MR53176}, if \( f(z+x) \) is a non-rational P\'olya frequency series, then \eqref{property P} holds. Notice also that if \( F(z) \) is a P\'olya frequency series and \( x\in X \), where \( X \) is an open interval between the largest zero and the smallest pole  of \( F(z) \), then \( F(z+x) \) is also a P\'olya frequency series. In this case, assuming \( F(z) \) to be non-rational, \eqref{property P} and respectively \eqref{sign pattern} hold for all \( x\in X \). In the following proposition we prove that \eqref{sign pattern} remains valid if \( W_m(x,x) \) is replaced by \( W_m(x,y) \) for \( x,y\in X \).

\begin{proposition}
\label{prop:esr}
Let \( F(z) \) be a non-rational P\'olya frequency series \eqref{PFS}. Further, let \( X \) be an open interval between the largest zero of \( F(z) \) and its smallest pole (\( X \) is unbounded on the left if \( F(z) \) has no zeros and is unbounded on the right if \( F(z) \) has no poles). Then, the Loewner kernel \( K_F(x,y) \) is extended sign-regular on \( X\times X \) with the sign pattern \( +--++--++--\cdots\). 
\end{proposition}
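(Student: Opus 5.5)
The plan is to exploit two facts: the sign of \( W_m(x,y) \) is already known on the diagonal \( x=y \), and \( X\times X \) is connected. By the discussion preceding the statement, if \( F \) is non-rational then \( F(z+x) \) is a non-rational P\'olya frequency series for every \( x\in X \), so \eqref{property P} holds. Hence \eqref{sign pattern} gives \( (-1)^{m(m-1)/2}W_m(x,x)>0 \) for all \( x\in X \). Since \( W_m(x,y) \) is continuous on the connected set \( X\times X \), it suffices to show that \( W_m(x,y)\neq 0 \) for all \( x,y\in X \). The sign \( \varepsilon_m=(-1)^{m(m-1)/2} \) then propagates from the diagonal, which is exactly the pattern \( +--++--\cdots \). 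Combined with \cite[Theorem~2.6]{Karlin}, this also yields strict sign-regularity.

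To prove non-vanishing, I would turn it into a zero-counting statement. If \( W_m(x,y)=0 \), there are reals \( c_0,\ldots,c_{m-1} \), not all zero, with \( \sum_j c_j K_F^{(i,j)}(x,y)=0 \) for \( 0\le i\le m-1 \). Thus the function \( \psi(s):=\sum_j c_j\partial_y^jK_F(s,y) \) vanishes to order \( m \) at \( s=x \). Now \( \partial_y^jK_F(s,y) \) is a confluent divided difference, so \( \psi(s)=(p(s)F(s)-q(s))/(s-y)^m \) for polynomials \( p,q \) of degree at most \( m-1 \), with \( p\not\equiv 0 \). By construction \( pF-q \) vanishes to order \( m \) at \( y \), and it also vanishes to order \( m \) at \( x \). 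When \( x=y \), the same reasoning gives a zero of order \( 2m \). So it is enough to prove the following claim: for polynomials \( p\not\equiv0 \), \( q \) of degree at most \( m-1 \), the function \( pF-q \) has at most \( 2m-1 \) zeros in \( X \), counted with multiplicity. Conversely, this claim is precisely the statement that the \( 2m \) functions \( 1,\ldots,z^{m-1},F,\ldots,z^{m-1}F \) form an extended Chebysh\"ev system on \( X \).

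The main obstacle is proving this zero bound, and I would attack it through the product representation \eqref{PFS}. For \( F=e^{\gamma z} \), repeated differentiation (Rolle's theorem, killing \( q \) with \( m \) derivatives) gives the bound directly. For a general \( F \), I would first treat the finite truncations \( F_N=e^{\gamma z}\prod_{i\le N}(1+\alpha_iz)/(1-\beta_iz) \). The approach is to factor one linear factor at a time and use a Rolle-type argument with a weight: divide \( pF-q \) by a suitable positive function on \( X \) and differentiate. This weight is where the restriction of \( X \) to lie between the largest zero and the smallest pole should enter. The aim is to show that the zero count of \( pF_N-q \) on \( X \) does not increase as factors are added, beyond the count forced by the degrees.

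A cleaner alternative, which I would try first, avoids induction on factors. The plan is to show directly that every nonzero \( 2m\times2m \) collocation determinant \( \det\big(z_l^j,\,z_l^jF(z_l)\big) \), with \( z_1\le\cdots\le z_{2m} \) in \( X \) taken in the confluent sense, has the same sign as its fully confluent limit at a single point. By Schoenberg's theorem that limit is a positive Toeplitz minor of \( (a_k(x)) \). I expect this to follow from Edrei's total positivity of all Toeplitz minors of the shifted coefficient sequences together with a Cauchy--Binet expansion in the Taylor coefficients around a base point of \( X \). Finally, passing from \( F_N \) to \( F \) requires uniform convergence on compact subsets of \( X \). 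This limit only preserves \( \ge 0 \), so strict positivity must be recovered from the non-rationality of \( F \), as in the diagonal case treated in footnote 2 of \cite{MR53176}.
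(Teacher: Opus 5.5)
Your opening reductions are correct. The diagonal sign plus connectedness of $X\times X$ reduces everything to $W_m(x,y)\neq0$. Your divided-difference computation also correctly shows that $W_m(x,y)=0$ would produce $p\not\equiv0$ and $q$ of degree at most $m-1$ with $pF-q$ having $2m$ zeros in $X$ counted with multiplicity.

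The gap is that the zero bound you then need is not an auxiliary lemma. By the same interpolation computation it amounts to non-vanishing of all (possibly confluent) Loewner determinants $\det\big(K_F(x_l,y_k)\big)$ on $X$. It is essentially the proposition itself, and you leave it unproved.

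Route (a) does not close it as described. If $F_{N+1}=(1+\alpha z)F_N$, then $pF_{N+1}-q=\big((1+\alpha z)p\big)F_N-q$ is a zero-count problem of a different Pad\'e type for $F_N$, and a pole factor raises the other degree. So an induction on factors would have to control all types at once, and no weighted Rolle step achieving this is given. Moreover, the limit $F_N\to F$ only yields $\varepsilon_mW_m\ge0$. Edrei's footnote gives strictness only on the diagonal, which you already used; off the diagonal, strictness is exactly the non-vanishing your connectedness argument needs.

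Route (b) is the right mechanism and is essentially the paper's, but you only state an expectation, and the framing is off. Showing that the nonzero collocation determinants share the sign of a confluent limit would not exclude zeros. What is needed is the following.
\begin{itemize}
\item Expand about a base point $x_0\in X$ at or to the left of all nodes; the paper shifts so that $X=(0,r_{\mathrm p})$ and expands at the origin. Then $\Phi(w):=F(x_0+w)=\sum_n a_nw^n$ is again a P\'olya frequency series, and all $w_l=z_l-x_0$ lie in $[0,r_{\mathrm p}-x_0)$. A base point between the nodes would spoil the signs below.
\item Cauchy--Binet writes the collocation determinant of $1,\ldots,w^{m-1},\Phi,\ldots,w^{m-1}\Phi$ as $\sum\det\big(w_l^{n_r}\big)\det\big(a_{n_{m+r}-j}\big)$. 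The monomial columns force $n_r=r-1$ for $r\le m$.
\item The generalized Vandermonde factors are non-negative. Divided by $\prod_{i<j}(w_j-w_i)$ they are Schur polynomials, which lets you pass to confluent nodes.
\item The Toeplitz minors are non-negative by total positivity.
\item The term with $n_r=r-1$ for all $r$ contributes the factor $\det(a_{i-j+m})_{i,j=1}^m$, which is positive by Edrei's footnote since $F$ is non-rational.
\end{itemize}
This gives the strict sign directly for every node configuration, so the diagonal/connectedness step and the truncations become unnecessary. The paper runs the same argument on $W_l(x,y)$ itself. It shows that every coefficient of the double Taylor series at $(0,0)$ has sign $(-1)^{l(l-1)/2}$, with the weights $W(\boldsymbol u)>0$ playing the role of your Schur factors.
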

\begin{proof}
Let \( r_{\mathrm z} \) be the largest zero of \( F(z) \) (necessarily \( r_{\mathrm z}\leq0 \)) or an arbitrary negative number if \( F(z) \) has no zeros. As mentioned before the proposition, \( F(z+r_{\mathrm z})\) is also a P\'olya frequency series. Hence, it is enough to prove the desired claim for \( X=(0,r_{\mathrm p})\), where \( r_{\mathrm p} \) is the smallest pole of \( F(z) \) and necessarily is also the radius of convergence of the series~\eqref{PFS-Taylor} (\( r_{\mathrm p}=\infty \) if \( F(z)\) has no poles). Fix a natural number \( l\geq 1 \). By definition, 
\begin{align}
\label{W det}
W_l(x,y) = \det\left( K^{(i-1,j-1)}(x,y)\right)_{i,j=1}^l = \sum_\sigma (-1)^\sigma \prod_{i=1}^l K^{(i-1,\sigma(i)-1)}(x,y),
\end{align}
where the sum is taken over all the permutations \( \sigma \) of \( \{1,2,\ldots,l\} \) and \( (-1)^\sigma \) denotes the sign of the permutation. It follows from the Taylor theorem for analytic functions in two variables that 
\begin{align}
\label{W Taylor}
W_l(x,y) = \sum_{n,m=0}^\infty w_{l;n,m}x^ny^m, \quad w_{l;n,m} = \frac{W_l^{(n,m)}(0,0)}{n!m!},
\end{align}
where the series is convergent for all \( |x|,|y|<r_{\mathrm p} \). To prove the proposition, it is sufficient to show that the above Taylor coefficients \( w_{l;n,m} \) all have the required sign pattern: $\sgn \,w_{l;n,m}=(-1)^{l(l-1)/2}$. We apply the generalized Leibniz rule to \eqref{W det} to conclude  that \( W_l^{(n,m)}(x,y) \) is equal to
\begin{align}
\sum_{|\boldsymbol q|=m}\sum_{|\boldsymbol p|=n}\frac{m!}{q_1!\cdots q_l!}\frac{n!}{p_1!\cdots p_l!}\sum_\sigma (-1)^\sigma \prod_{i=1}^l K^{(p_i+i-1, q_{\sigma(i)} + \sigma(i) -1)}(x,y),
\end{align}
where \( \boldsymbol p=(p_1,\ldots,p_l) \), \( |\boldsymbol p|=p_1 + \cdots + p_l\), and \( p_i\geq 0 \) (the same notation holds for \( \boldsymbol q\)). Let, as before, \( a_n \) be the Taylor coefficients of \( F(z) \) at the origin. By evaluating the above expression at \( (0,0) \), dividing it by \( n!m! \), and using \eqref{der K}, we obtain
\begin{align}
w_{l;n,m} &= \sum_{|\boldsymbol q|=m}\sum_{|\boldsymbol p|=n} \left(\prod_{s=1}^l \frac{(p_s+s-1)!}{p_s!}\frac{(q_s+s-1)!}{q_s!}\right) \det\big(a_{p_i+i+q_j+j-1}\big)_{i,j=1}^l \\
\label{wnm}
& = \sum_{|\boldsymbol q|=m}\sum_{|\boldsymbol p|=n} \left(\prod_{s=1}^l \frac{\Gamma(p_s+s)}{\Gamma(p_s+1)}\frac{\Gamma(q_s+s)}{\Gamma(q_s+1)}\right) \det\big(a_{p_i+i+q_j+j-1}\big)_{i,j=1}^l,
\end{align}
where convenience of the second representation will become apparent momentarily. To proceed,  notice that, up to a sign, several of the summands above correspond to the same Hankel determinant; that is, their corresponding matrices differ only by row and/or column permutations. Hence, our next goal is to bring all determinants that are identical up to sign into a standard (Hankel) form. Suppose first that \( p_i+i=p_k+k \) for some \( i \) and \( k, i\neq k \), then the corresponding determinant vanishes since the underlying matrix has two identical rows. Otherwise, let \( \boldsymbol p \) be such that all \( p_i+i \) are all distinct. Then, there exists a permutation \( \sigma_{\boldsymbol p} \) such that
\[
p_{\sigma_{\boldsymbol p}(1)}+\sigma_{\boldsymbol p}(1)<p_{\sigma_{\boldsymbol p}(2)}+\sigma_{\boldsymbol p}(2)< \cdots < p_{\sigma_{\boldsymbol p}(l)}+\sigma_{\boldsymbol p}(l).
\]
Define \( u_i = p_{\sigma_{\boldsymbol p}(i)}+\sigma_{\boldsymbol p}(i)-i \). It holds that \( u_1 = p_{\sigma_{\boldsymbol p}(1)}+\sigma_{\boldsymbol p}(1)-1\geq p_{\sigma_{\boldsymbol p}(1)} \geq 0 \) and
\[
u_{i+1} = p_{\sigma_{\boldsymbol p}(i+1)}+\sigma_{\boldsymbol p}(i+1)-i - 1 > p_{\sigma_{\boldsymbol p}(i)}+\sigma_{\boldsymbol p}(i)-i -1 = u_i - 1.
\]
Hence, \( u_{i+1}\geq u_i\geq \ldots \ge u_1\geq 0 \). Observe also that
$
|\boldsymbol u|=|\boldsymbol p|=n\,.
$
Denote by $\boldsymbol \Upsilon$ the collection of all vectors $\boldsymbol u=(u_1,\ldots,u_l)$ with integer coordinates that satisfy the properties that $|\boldsymbol u|=n$ and {$0\le u_1\le \ldots \le u_{l-1}\le u_{l}$.} Our argument above shows that every $\boldsymbol p$ corresponds to some element in $\boldsymbol \Upsilon$. If $\boldsymbol u$ is fixed and $\boldsymbol p$ gives rise to such $\boldsymbol u$, we will write 
\( \boldsymbol p\sim \boldsymbol u \). Conversely, suppose \( \boldsymbol u \in \boldsymbol \Upsilon\). Given a permutation \( \sigma \) of \( \{1,\ldots,l\} \), set
\[
p_{\sigma(i)}^\sigma := u_i+i-\sigma(i), \quad i\in\{1,\ldots,l\}.
\]
It is still true that \( p_1^\sigma+\cdots +p_l^\sigma = n \) and that \( p_i^\sigma+i>0 \), but, in general, it is not true that \( p_i^\sigma\geq0 \). However, if \( p_i^\sigma<0 \), then \( \Gamma^{-1}(p_i^\sigma+1)=0 \). Denote \( D(\boldsymbol p,\boldsymbol q) := \det\big(a_{p_i+i+q_j+j-1}\big)_{i,j=1}^l \). Therefore, for fixed $\boldsymbol u$ and $\boldsymbol q$, we get 
\begin{align}
\sum_{\boldsymbol p\sim \boldsymbol u} \left(\prod_{s=1}^l\frac{\Gamma(p_s+s)}{\Gamma(p_s+1)}\right)D(\boldsymbol p,\boldsymbol q)& = \left(\sum_\sigma (-1)^\sigma \prod_{s=1}^l\frac{\Gamma(p_s^\sigma+s)}{\Gamma(p_s^\sigma+1)}\right) D(\boldsymbol u,\boldsymbol q) \\
& = \left(\sum_\sigma (-1)^\sigma \prod_{i=1}^l \frac{\Gamma(u_i+i)}{\Gamma(u_i+i-\sigma(i)+1)}\right) D(\boldsymbol u,\boldsymbol q).
\end{align}
Observe that
\begin{eqnarray*}
W(\boldsymbol u) := 
\sum_\sigma (-1)^\sigma \prod_{i=1}^l \frac{\Gamma(u_i+i)}{\Gamma(u_i+i-\sigma(i)+1)} =
\det\left( \frac{\Gamma(u_i+i)}{\Gamma(u_i+i-k+1)} \right)_{i,k=1}^l.
\end{eqnarray*}
Writing this explicitly in the matrix form gives
\[
W(\boldsymbol u) = \det\begin{pmatrix}
1 & u_1 & \cdots & u_1(u_1-1)\cdots(u_1-l+2) \\
1 & u_2+1 & \cdots & (u_2+1)(u_2)\cdots(u_2-l+3) \\
\vdots & \vdots & \ddots & \vdots \\
1 & u_l+l-1 & \cdots & (u_l+l-1)(u_l+l)\cdots(u_l+1)
\end{pmatrix}.
\]
Upon close examination it becomes clear that the above determinant can be rewritten as the Vandermonde determinant in \( u_1,u_2+1,\ldots,u_l+l-1 \). Hence, since the entries in $\boldsymbol u$ are increasing, we have that
\begin{align}
\label{sign Wu}
W(\boldsymbol u) = \prod_{1\leq i<j\leq l} (u_j-u_i+j-i)>0.
\end{align}
Exactly the same considerations apply to the sum over \( \boldsymbol q \) in \eqref{wnm}. Thus, we get that
\[
w_{l;n,m} = \sum_{\boldsymbol v\in \boldsymbol\Upsilon}\sum_{\boldsymbol u\in \boldsymbol\Upsilon} W(\boldsymbol v)W(\boldsymbol u) D(\boldsymbol u,\boldsymbol v).
\]
Since \( F(z) \) is a non-rational P\'olya frequency series, it then holds that
\begin{align}
\label{sign Duv}
(-1)^{\frac{l(l-1)}{2}}D(\boldsymbol u,\boldsymbol v) = \det\big( a_{u_i+i+v_{l-j+1}+l-j} \big)_{i,j=1}^l>0,
\end{align}
where the last conclusion follows from the fact the determinant in question is a minor of the Toeplitz matrix \( (a_{i-j})_{i,j=1}^\infty \) corresponding to rows \( u_1+1+v_l+l,\ldots,u_l+l+v_l+l\) and columns \( 1,v_l-v_{l-1}+2,\ldots,v_l-v_1+l\). Finally, \eqref{sign Duv} together with \eqref{W Taylor} and \eqref{sign Wu} now shows that 
\[
(-1)^{\frac{l(l-1)}{2}} W_l(x,y)>0,  \quad x,y\in(0,r_{\mathrm r}). \qedhere
\]
\end{proof}

\section{Proof of Theorem~\ref{thm:1} and Corollary~\ref{c2}}

\begin{proof}[Proof of Theorem~\ref{thm:1}]
Let \( F(x) \) be either a non-rational Nevanlinna-Pick function or a non-rational P\'olya frequency series as in the statement of the theorem. Furthermore, let \( R_n(x) \) be a AAA approximant of \( F(x) \) corresponding to the pair \( (\nu_n,T_n) \) as in \eqref{nu_n and T_n}. Given an interval \( [a,b] \) as in the statement of the theorem, it has been shown in Propositions~\ref{prop:etp} and~\ref{prop:esr} of the previous section that the Loewner kernel \( K_F(x,y) \) is extended sign-regular (extended totally positive in the case of Nevanlinna-Pick functions) on some open interval \( X\supset[a,b] \). Define
\[
\varphi_i(x) = K_F(x;t_i), \quad i\in\{1,\ldots,n\}.
\]
Let \( b_1,\ldots,b_n \) be as in \eqref{L2-min} and set
\[
\varphi({\nu_n};x) =  b_1 \varphi_1(x) + \cdots + b_n\varphi_n(x). 
\]
Denote by \( \lambda \) the value of \eqref{L2-min}. The Lagrange multiplier method yields
\begin{equation}
\label{lagrangian}
  \int \varphi(\nu_n;x)\varphi_k(x)d\nu_n(x)=\lambda b_k, \quad \lambda=\int\varphi^2(\nu_n;x)d\nu_n(x).
\end{equation}
From this we infer that $\lambda$ is the smallest positive eigenvalue of the Gram matrix $G_n(\nu_n)$, see \eqref{Gram}, and the vector \( (b_1,\ldots,b_n)^\mathsf{T} \) is a corresponding eigenvector, properly normalized. Since \( K_F(x,y) \) is extended sign-regular, \( G_n(\nu_n) \) is a strictly totally positive as explained right after \eqref{Gram}. Thus,
\begin{align}
\label{sign bk}
\sgn\, b_l = (-1)^{l-1} \sgn \, b_1, \quad l\in\{2,\ldots,n\},    
\end{align}
as noted prior to Corollary~\ref{cor:ortho}. Our normalization $\sgn \, b_1>0$ makes this \( (b_1,\ldots,b_n)^\mathsf{T} \) unique. It further follows from \eqref{lagrangian} that
\begin{align}
\label{orthogonality}    
\int \varphi(\nu_n;x)\varphi(x) d{\nu_n}(x) =0 \quad \text{when} \quad b_1c_1 + \cdots + b_nc_n =0,
\end{align}
where \( \varphi(x) \) is a generalized polynomial corresponding to \( c_1,\ldots,c_n \), see \eqref{fg1}. Then, according to Corollary~\ref{cor:ortho}, \( \varphi(\nu_n;x) \) has exactly \( n-1 \) zeros, all on \( (a,b) \) and all simple. If we denote the zeros of $\varphi(\nu_n;x)$ by \( t_1^\prime,\ldots,t_{n-1}^\prime \), then we get from \eqref{lin error} that
\[
\frac{(P_nF-Q_n)(z)}{V_n(z)} = \frac{\varphi(\nu_n;z)}{\prod_{k=1}^{n-1}(z-t_k^\prime)},
\]
where the right-hand side is clearly analytic around \( [a,b] \), which finishes the proof of the theorem. 
\end{proof}

\begin{proof}[Proof of Corollary~\ref{c2}]

Let $F(z)$, \( R_n(z) \), \( [a,b] \), and $(a_{\max},b_{\max})$ be as stated in the corollary, and let \( P_n(z),Q_n(z) \), and \( V_n(z) \) be as in Theorem~\ref{thm:1}. Take an arbitrary $c\in (a_{\max},a)$ and define the linear fractional transformation \( t(z) := c+z^{-1} \). Then,
\[
F_c(z) := F(t(z)) = A_c + \widehat\sigma_c(z), \quad \supp\, \sigma_c \subseteq I_c:=\big[-(c-a_{\max})^{-1},(b_{\max}-c)^{-1}\big],
\]
for some real constant \( A_c \) and a positive Borel measure $\sigma_c$. Set
\[
P_n^c(z) : = z^{n-1}P_n(t(z)), \;\; Q_n^c(z) := z^{n-1}Q_n(t(z)), \;\;  V_n^c(z) := z^{2n-1} V_n(t(z)).
\]
Then, all of \( 2n-1 \) zeros of \( V_n^c(z) \) (counting multiplicities as \( V_n(z)\) may have double zeros)  belong to the interval \( [a_c,b_c]:=[(b-c)^{-1},(a-c)^{-1}] \). Consequently, \( Q_n^c(z)/P_n^c(z) \) is a multi-point Pad\'e approximant of type \( (n-1,n-1) \) corresponding to the interpolation set formed by the zeros of \( V_n^c(z) \). More precisely,
\[
\frac{(P_n^c F_c-Q_n^c)(z)}{V_n^c(z)} = \frac1{z^n}\frac{(P_n F-Q_n^c)(t(z))}{V_n(t(z))}
\]
is analytic around \( [a_c,b_c] \) (notice that \( a_c>0 \)). Denote the number of potential common zeros of \( P_n^c(z) \) and \( Q_n^c(z) \) by \( n-\tilde n \). Let \( \tilde P_n(z) \) and \( \tilde Q_n(z) \) be polynomials obtained from \( P_n^c(z) \) and \( Q_n^c(z) \) by removing these common zeros. Naturally, these are polynomials of degree at most \( \tilde n-1 \). Similarly, let \( \tilde V_n(z) \) be the polynomial obtained from \( V_n^c(z) \) by removing zeros that are  simultaneous common to \( P_n^c(z) \), \( Q_n^c(z) \), and \( V_n^c(z) \). It necessarily follows that \( n+\tilde n-1\leq \deg \tilde V_n \leq 2n-1\). Then, the expression 
\[
\frac{(\tilde P_nF_c -\tilde Q_n)(z)}{\tilde V_n(z)}
\]
remains analytic around \( [a_c,b_c] \) and, in particular, inside any positively oriented Jordan curve \( \Gamma \) that encircles \( [a_c,b_c] \) and contains \(\supp\, \sigma_c \) in its exterior. Thus, it follows from the Cauchy's theorem that
\[
0 = \int_{\Gamma} s^k\frac{(\tilde P_nF_c -\tilde Q_n)(s)}{\tilde V_n(s)} ds
\]
for any nonnegative integer \( k \). Since \( s^k (A_c\tilde P_n(s)-\tilde Q_n(s))/\tilde V_n(s) \) is analytic in the exterior of \( \Gamma \) and vanishes at infinity with order of at least 2 when \( k+\tilde n-1\leq \deg \tilde V_n-2\), we have
\begin{align}
0 & = -\frac1{2\pi\ic}\int_{\Gamma} s^k\frac{(\tilde P_n\widehat \sigma_c)(s)}{\tilde V_n(s)} ds = \int \left( -\frac1{2\pi\ic}\int_{\Gamma} \frac{s^k \tilde P_n(s)}{\tilde V_n(s)}\frac{ds}{s-x}\right) d\sigma_{c}(x) \\
& = \int x^k \tilde P_n(x) \frac{d\sigma_{c}(x)}{\tilde V_n(x)}, \quad 0\leq k\leq n-2,
\end{align}
by the Cauchy theorem, the Fubini-Tonelli theorem, and the Cauchy integral formula applied in the exterior domain of \( \Gamma \). Since \( \tilde V_n(z) \) maintains a constant sign on \( \supp\, \sigma_c\subset I_c \) (here we use \( I_c \cap [a_c,b_c]=\emptyset \)), \( \tilde P_n(z)\) is a polynomial satisfying \( n-1 \) orthogonality conditions with respect to a positive measure supported on $I_c$. Thus, \( n-1 = \deg \tilde P_n \leq \tilde n-1 \) (in particular, \( \tilde P_n(z)=P_n^c(z) \)) and all the zeros of \( \tilde P_n(x)\) belong to $I_c$. Consequently, \( P_n(z)\) and \( Q_n(z) \) share no common zeros, and all the zeros of \( P_n(z) \) belong to \( (-\infty,a_\mathrm{max}] \cup [b_\mathrm{max},\infty) \).

We now claim that for any closed set \( K \) in \( D_c:=\overline \C\setminus I_c \), there exists a constant \( q_K\in(0,1) \) such that
\begin{align}
\label{convergence}
\big|F_c(z) - Q_n^c(z)/P_n^c(z)\big| \leq q_K^n, \quad z\in K,
\end{align}
for all sufficiently large \( n \). Suppose this claim were false. Then, there would exist sequences \( \{n_i\}\subset\N \), \( \{z_i\}\subset K \), and \( \{\epsilon_i\} \) with \( \epsilon_i\downarrow 0 \) as \( i\to\infty \) such that
\begin{align}
\label{contradiction}
\big|F_c(z_i) - Q_{n_i}^c(z_i)/P_{n_i}^c(z_i)\big| \geq (1-\epsilon_i)^{n_i}.
\end{align}
Denote by \( \nu_n \) the  counting measure of zeros of \( V_n^c(z) \) normalized to be a probability measure. Since all the measures \( \nu_n \) are supported on \( [a_c,b_c] \),  Helly's selection theorem guarantees that \( \{\nu_{n_i}\} \) contains a further subsequence \( \{\nu_{n_{i_k}}\} \)  converging  in  weak$^*$ sense to some probability Borel measure \( \nu \). Then, by \cite[Theorem~6.1.6]{StahlTotik} (see also \cite{GL}), the bound
\begin{equation}\label{sd-h1}
\lim_{k\to\infty}\big|F_c(z) - Q_{n_{i_k}}^c(z)/P_{n_{i_k}}^c(z)\big|^{1/2{n_{i_k}}} \leq e^{-G_{D_c}(\nu;z)}
\end{equation}
holds uniformly on \( K \), where \( G_{D_c}(\nu;z) \) represents the Green potential of the measure \( \nu \) in the domain \( D_c \). Since \( G_{D_c}(\nu;z) \) is positive and superharmonic  in \( D_c \), \eqref{sd-h1} contradicts \eqref{contradiction}. Thus, \eqref{convergence} does take place and yields the last claim of the corollary by replacing \( z \) with \( t^{-1}(z)\).
\end{proof}

\bibliographystyle{plain}

\bibliography{AAA}

\end{document}